\theoremstyle{plain}
\newtheorem{theorem}{Theorem}[section]
\newtheorem{lemma}[theorem]{Lemma}
\newtheorem{proposition}[theorem]{Proposition}
\newtheorem{defn}[theorem]{Definition}
\newtheorem{rmk}[theorem]{Remark}
\theoremstyle{plain}
\theoremstyle{remark}
\newcommand{\Real}{\mathbb R}
\newcommand{\mbt}{\mathbf t}
\newcommand{\mba}{\mathbf a}
\newcommand{\mbb}{\mathbf b}
\newcommand{\cuxi}{\mathfrak I}
\newcommand{\chg}[1]{{\color{red}#1}}
\DeclareMathOperator{\dist}{dist}
\newcommand{\ef}[1]{f_{\cuxi^{#1}}}
\newcommand{\eff}[2]{f_{\cuxi^{#1}_{#2}}}
\newcommand{\cu}[1]{\cuxi^{#1}}
\newcommand{\cuu}[2]{\cuxi^{#1}_{#2}}
\newcommand{\tla}{T_\lambda^\gamma}
\newcommand{\mga}{M^\gamma_\tau}
\newcommand{\gatha}{\gamma_{\tau}^{h,\mba}(t)}
\newcommand{\ma}[1]{M_{#1}^{\gamma,\mba}}
\newcommand{\wei}{w_\gamma^\alpha}
\newcommand{\tlawf}{\tla [w,f]}
\newcommand{\tlaww}[1]{\tla [#1,f]}
\newcommand{\gao}{\gamma_{\circ}}
\newcommand{\gth}{\gamma_\tau^h}
\newcommand{\clag}{\mathfrak G(\epsilon)}
\newcommand{\dsiga}{d\sigma_{\!A,h}^\mba}
\newcommand{\siga}{\sigma_{\!A,h}^\mba}
\newcommand{\gathaa}{\gamma_\tau^{h,\mba}}
\title[Restriction estimates for space curves]{Restriction Estimates for space curves with respect to general measures}
\author{Seheon Ham}
\author{Sanghyuk Lee}
\thanks{Supported in part by  NRF grant 2012008373 (Republic of Korea).}
\keywords{Restriction estimate, space curves, affine arclength
measure} \subjclass[2010]{42B10}
\address{Department of Mathematical Sciences, Seoul National University, Seoul 151-747, Republic of Korea}
\email{hamsh@snu.ac.kr} \email{shklee@snu.ac.kr}
\begin{document}
\begin{abstract}
In this paper we consider adjoint restriction estimates for space
curves with respect to general measures and obtain  optimal
estimates when the curves satisfy a finite type condition. The
argument here is new in that it doesn't rely on  the \emph{offspring
curve} method,  which has been extensively used in the previous
works. Our work was inspired by the recent argument due to Bourgain
and Guth which was used to deduce linear restriction estimates from
multilinear estimates for hypersurfaces.
\end{abstract}

\maketitle

\section{introduction}

Let $\gamma:I=[0,1] \to \mathbb {R}^d,\, d\ge 2 $ be a smooth
function. For $\lambda\ge 1$ we define an oscillatory integral
operator by
\[ T_\lambda^\gamma f(x)= \int_I e^{i\lambda x\cdot \gamma(t)} f(t) dt. \]
This operator is an adjoint form of the Fourier restriction to the
curve $\lambda\gamma(t)$, $t\in I$. Let $\nu$ be a measure in
$\mathbb R^d$ and $1\le p,q\le \infty$. We consider  the oscillatory
estimate
\begin{equation}\label{exten}
\| T_\lambda^\gamma f\|_{L^q(d\nu)} \leq C\lambda^{-\beta}
\|f\|_{L^p(I)}.
\end{equation}

\subsection*{\it Nondegenerate curves} It is well known that the range of $p$,
$q$ is related to the curvature condition of $\gamma$. When $\nu$ is
the Lebesgue measure the problem of obtaining the estimate
\eqref{exten} has been considered by many authors \cite{Z,
presti,christ,drury} (also see \cite{DM1,DM2,BOS1,BOS2,BOS3,DeW}).
Under the assumption
\begin{equation}\label{torsion}
\det(\gamma'(t), \gamma''(t), \cdots, \gamma^{(d)}(t)) \neq 0
\end{equation}
for all $t\in I$,  which we call the nondegeneracy condition,
it is known that \eqref{exten} holds with $\beta=d/q$ if
\begin{equation}\label{opt}
 \frac{d(d+1)}{2q} + \frac{1}{p} \le 1 \quad \textrm{and}\quad q > \frac{d^2 + d +2}{2}\,.
\end{equation}
In two dimension this is due to Zygmund \cite{Z} and a
generalization to oscillatory integral was obtained by H\"ormander
\cite{H} (see \cite{F} for earlier work by Fefferman and Stein). In
higher dimensions $d\ge 3$  the estimates on the whole range were
proved by Drury \cite{drury} after earlier partial results due to
Prestini \cite{presti} and Christ \cite{christ}. Necessity of the
condition ${ d(d+1) / 2q} + {1 / p} \le 1$ can be shown by a Knapp
type example. When $\gamma(t) = (t, t^2, \cdots, t^d)$ and $d\ge 3$,
by a result due to Arkhipov, Chubarikov and Karatsuba \cite{act} it
follows that the condition $q>(d^2 + d +2)/2$ is necessary. The
operator $T_\lambda^\gamma$ can also be generalized by replacing
$x\cdot \gamma(t)$ with $\phi(x,t)$. In this case, Bak and the
second author \cite{bl} showed that \eqref{exten} holds with $\beta
= d/{q}$ for $p,q$ satisfying \eqref{opt} whenever $\det (
\partial_t (\nabla_x \phi), \partial_t^2 (\nabla_x
\phi),\cdots,\partial_t^d (\nabla_x \phi)) \neq 0$ holds. Bak,
Oberlin and Seeger \cite{BOS1} showed a weak type estimate for the
critical $p=q={(d^2 + d +2)/ 2}$.

In this paper, we are concerned with  $L^p$--$L^q$ estimate for
$T_\lambda$ with respect to general measures other than the Lebesgue
measure. More precisely, for $0 < \alpha \leq d$, let $\mu$ be a
positive Borel measure which satisfies
\begin{equation}\label{bmeasure}
\mu ( B(x, \rho) ) \leq C_\mu \rho^\alpha, \quad \rho >0
\end{equation}
for any $x\in \Real^d$. Here $C_\mu$ is independent of $x$, $\rho$.
Considering $f=\chi_{[0,1]}$, one easily sees that the best possible
$\beta$ for \eqref{exten} is $\alpha/q$ when $\nu\, (=\mu)$
satisfies \eqref{bmeasure}. In fact, note that $|T_\lambda
f(x)|\gtrsim 1$ if $|x|\le c\lambda^{-1}$ for a sufficiently small
$c>0$. We aim to find  the optimal range of $(p,q)$ for which the
inequality
\begin{equation}\label{frac} \| T_\lambda^\gamma f\|_{L^q(d\mu)}
\le C\lambda^{-\frac \alpha q} \|f\|_{L^p(I)}
\end{equation}
holds under the assumption that $\mu$ satisfies \eqref{bmeasure}.

In order to state our results we define a number
$\beta=\beta(\alpha)$ by setting
\[
\beta(\alpha)=(j+1) \alpha+\frac{(d-j-1)(d-j)}{2}
\]
if $d-j-1<\alpha\le d-j$ for  $j=0,\dots, d-1$. Note that
$\beta(\alpha)$ continuously increases as $\alpha$ increases.
\begin{figure}[t] \label{fig1}
\centerline{\epsfig{file = 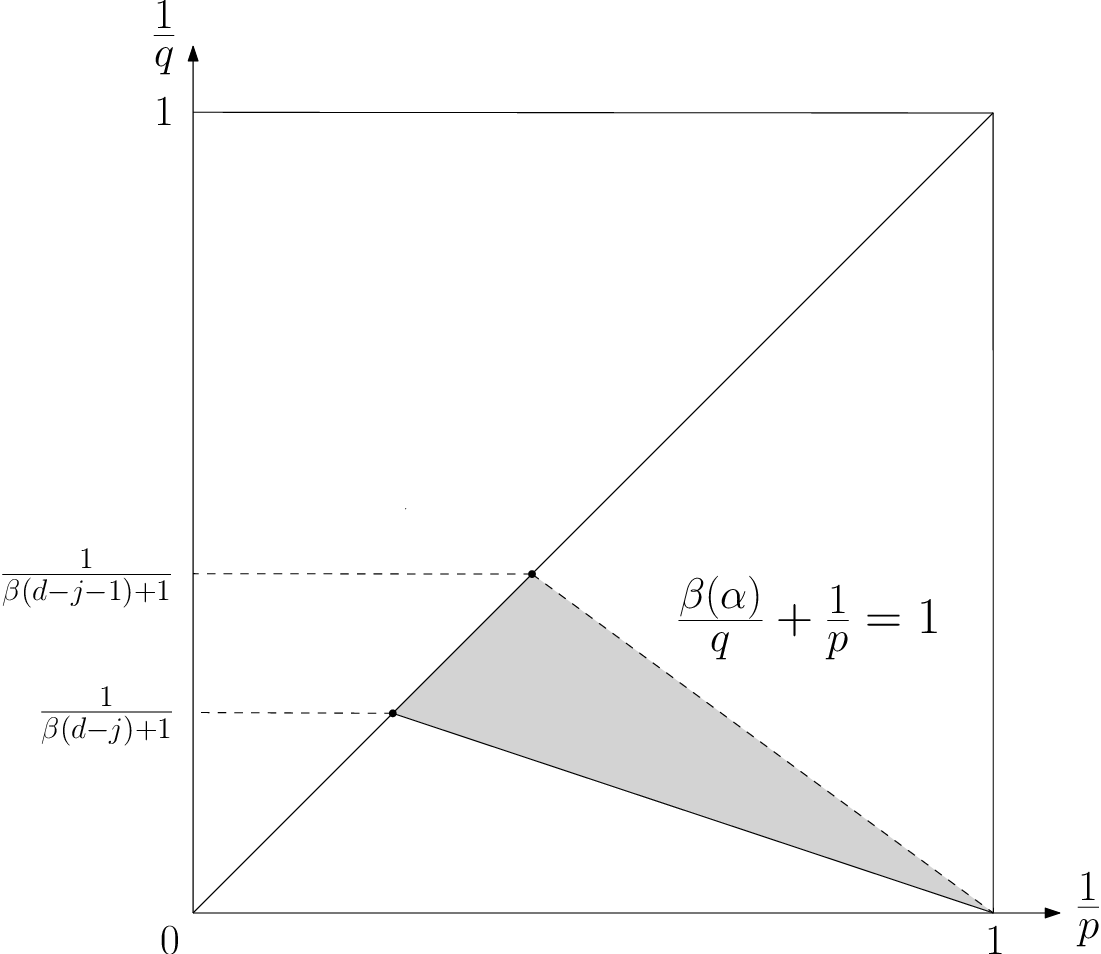, width=0.45\textwidth}}
\caption{\small For $d-j-1 < \alpha\leq d-j$, the edge line
${\beta(\alpha)}/{q} + {1}/{p} =1$ is contained in the shaded area.
The region given by ${\beta(\alpha)}/{q} + {1}/{p} < 1$ gets larger
as $\alpha$ decreases.}
\end{figure}
The following is our first result.

\begin{theorem}\label{mainthm}
Let $\gamma\in C^{d+1}(I)$ and $0<\alpha\le d$ and let $\mu$ be a
positive  Borel measure. Suppose that $\gamma$ and $\mu$ satisfy
\eqref{torsion} and \eqref{bmeasure}, respectively. Then, for $1\le
p,q\le \infty$ satisfying $d/{q}\le (1-1/p)$, $q\ge 2d$, and
\[
\beta(\alpha)/q+1/p<1, \,\, q
> \beta(\alpha)+1,
\]
 there exists a constant $C$ such that \eqref{frac} holds for $f
\in L^p(I)$ and $\lambda\ge 1$.
\end{theorem}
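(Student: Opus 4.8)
The plan is to prove Theorem~\ref{mainthm} by a Bourgain--Guth broad/narrow induction, the only genuinely new ingredient being a $d$-linear form of \eqref{frac} for the measure $\mu$, valid on a range strictly larger than the conjectured linear one and essentially elementary.

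\emph{Reductions.} Interpolating \eqref{frac} against the trivial bound $\|\tla f\|_{L^\infty(d\mu)}\le\|f\|_{L^1(I)}$ and using $\|f\|_{L^p(I)}\le\|f\|_{L^{p'}(I)}$ for $p\le p'$ (as $|I|=1$), one reduces to proving \eqref{frac} for $q$ slightly above $\max\{2d,\beta(\alpha)+1\}$ and $1/p=1-\max\{d,\beta(\alpha)\}/q$. Since $\gamma\in C^{d+1}(I)$ satisfies \eqref{torsion} on the compact interval $I$, a partition of unity, an affine change of variables in $\Real^d$ and a reparametrisation confine $\gamma$ to a fixed small $C^{d+1}$-neighbourhood $\clag$ of the moment curve $\gao(t)=(t,t^2/2!,\dots,t^d/d!)$, with \eqref{torsion} holding with a uniform constant. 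Finally, after a harmless truncation (restrict $\mu$ to a large ball, then let the ball grow) we may assume a priori that
\[
Q(\lambda):=\sup\Big\{\|\tla f\|_{L^q(d\mu)}\big/\big(C_\mu^{1/q}\lambda^{-\alpha/q}\|f\|_{L^p(I)}\big):\ \gamma\in\clag,\ \mu\ \text{as in \eqref{bmeasure}},\ 0\ne f\in L^p(I)\Big\}<\infty,
\]
and the goal becomes to bound $Q(\lambda)$ uniformly for $\lambda\ge1$.

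\emph{The $d$-linear estimate.} Let $J_1,\dots,J_d\subset I$ be pairwise separated, $\dist(J_i,J_j)\ge\kappa$, and let $f_i$ be supported in $J_i$. Since $\prod_{i=1}^d\tla f_i(x)=\widehat F(\lambda x)$, where $F$ is the push-forward of $\prod_i f_i(t_i)\,dt_i$ under $(t_1,\dots,t_d)\mapsto\sum_i\gamma(t_i)$, and the Jacobian of this map is $\det(\gamma'(t_1),\dots,\gamma'(t_d))\gtrsim\prod_{i<j}|t_i-t_j|\gtrsim\kappa^{d(d-1)/2}$ by \eqref{torsion}, one gets $\|F\|_{L^2}^2\lesssim\kappa^{-d(d-1)/2}\prod_i\|f_i\|_{L^2}^2$. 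As $\widehat F$ has support in a fixed ball, $\widehat F(\lambda\cdot)$ has Fourier support in $B(0,C\lambda)$; mollifying with a bump adapted to that ball and using \eqref{bmeasure} gives the Agmon--H\"ormander type bound $\int|\widehat F(\lambda\cdot)|^2\,d\mu\lesssim C_\mu\lambda^{d-\alpha}\|\widehat F(\lambda\cdot)\|_{L^2}^2=C_\mu\lambda^{-\alpha}\|F\|_{L^2}^2$. Hence
\[
\Big\|\textstyle\prod_{i=1}^d|\tla f_i|^{1/d}\Big\|_{L^{2d}(d\mu)}\lesssim\kappa^{-(d-1)/4}\,C_\mu^{1/(2d)}\lambda^{-\alpha/(2d)}\prod_{i=1}^d\|f_i\|_{L^2(I)}^{1/d},
\]
and interpolating with $\prod_i|\tla f_i|^{1/d}\le\prod_i\|f_i\|_{L^1(I)}^{1/d}$ yields the analogue with $L^{2d},L^2$ replaced by $L^q,L^p$ for all $q\ge2d$, $d/q+1/p\le1$ — a range containing the $(p,q)$ fixed above, the point being that only transversality, hence the exponent $d$ rather than $\beta(\alpha)$, is used.

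\emph{Broad/narrow step and bootstrap.} Fix $K$ large, split $I$ into $\sim K$ intervals $\tau$ of length $1/K$, and set $f_\tau=f\chi_\tau$. Call $x$ \emph{broad} if among the $\tau$ with $|\tla f_\tau(x)|>K^{-2}\max_{\tau'}|\tla f_{\tau'}(x)|$ there are $d$ pairwise $\ge10/K$-separated ones, and \emph{narrow} otherwise. On the broad set $|\tla f(x)|\lesssim K^{O(1)}\max\prod_{i=1}^d|\tla f_{J_i}(x)|^{1/d}$ over separated $d$-tuples of these intervals, so summing the $d$-linear estimate (with $\kappa\sim1/K$, which costs another $K^{O(1)}$) over the $O(K^d)$ tuples gives $\int_{\mathrm{broad}}|\tla f|^q\,d\mu\lesssim K^{O(1)}C_\mu\lambda^{-\alpha}\|f\|_{L^p}^q$. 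On the narrow set a pigeonholing confines the significant $\tau$ to $\le d-1$ clusters, each of length $O(1/K)$ and hence containing $O(1)$ of the $\tau$, so $|\tla f(x)|\lesssim\max_\tau|\tla f_\tau(x)|$ and $\int_{\mathrm{narrow}}|\tla f|^q\,d\mu\lesssim\sum_\tau\int|\tla f_\tau|^q\,d\mu$. For $\tau=[c,c+1/K]$ write $\gamma(c+s/K)=\gamma(c)+N_\tau\gao(s)+O(K^{-d-1})$ with $N_\tau=[\gamma'(c)/K\mid\cdots\mid\gamma^{(d)}(c)/K^d]$; normalising by the \emph{full} inverse $N_\tau^{-1}$ (not by the one adapted to $\gao$) one obtains $\tla f_\tau(x)=K^{-1}e^{i\lambda x\cdot\gamma(c)}T_\lambda^{\widetilde\gamma_\tau}g_\tau(N_\tau^{\mathsf T}x)$ with $\widetilde\gamma_\tau\in\clag$ (the error, pushed through $N_\tau^{-1}$, is still $O(K^{-1})$ in $C^{d+1}$) and $\|g_\tau\|_{L^p(I)}=K^{1/p}\|f_\tau\|_{L^p(I)}$. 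Thus $\int|\tla f_\tau|^q\,d\mu=K^{-q}\int|T_\lambda^{\widetilde\gamma_\tau}g_\tau|^q\,d\mu_\tau$ with $\mu_\tau=(N_\tau^{\mathsf T})_*\mu$; since $\mu_\tau(B(y,\rho))$ is the $\mu$-mass of an ellipsoid with semi-axes $\sim K\rho,K^2\rho,\dots,K^d\rho$, covering by balls of radius $K^k\rho$ for the best $k$ and invoking \eqref{bmeasure} gives $\mu_\tau(B(y,\rho))\lesssim C_\mu K^{E(\alpha)}\rho^\alpha$ with $E(\alpha)=\min_{1\le k\le d}\big[k\alpha+\tfrac{(d-k)(d-k+1)}2\big]$, and an elementary optimisation shows $E(\alpha)=\beta(\alpha)$. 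Feeding the definition of $Q(\lambda)$ into each summand and summing over $\tau$ (with $\sum_\tau\|f_\tau\|_{L^p}^q\le\max\{1,K^{1-q/p}\}\|f\|_{L^p}^q$) yields $\int_{\mathrm{narrow}}|\tla f|^q\,d\mu\lesssim C_\mu\lambda^{-\alpha}\|f\|_{L^p}^q\,Q(\lambda)^q K^{-\delta q}$, where $\delta=\min\{1-\beta(\alpha)/q-1/p,\ 1-(\beta(\alpha)+1)/q\}>0$ by the hypotheses $\beta(\alpha)/q+1/p<1$ and $q>\beta(\alpha)+1$. Combining the two contributions, $Q(\lambda)\lesssim K^{O(1)/q}+K^{-\delta}Q(\lambda)$; choosing $K$ so large that the implied constant times $K^{-\delta}$ is $<1/2$, we conclude $Q(\lambda)\lesssim_K1$ for all $\lambda\ge1$, which is \eqref{frac}.

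\emph{Where the difficulty lies.} No single step is long, but the bookkeeping must be done carefully, and the main obstacles are: (i) that renormalising each subarc by the \emph{full} map $N_\tau$ keeps the rescaled curve in the fixed class $\clag$ — with the naive normaliser adapted to $\gao$ the $C^{d+1}$-perturbation would be amplified by $K^{d-1}$ per step and the induction would break; (ii) the sharp covering identity $E(\alpha)=\beta(\alpha)$, which is exactly what makes the per-step gain $K^{-\delta}$ positive on precisely the hypothesised region $\beta(\alpha)/q+1/p<1$, $q>\beta(\alpha)+1$; and (iii) the (routine but necessary) truncation that makes $Q(\lambda)$ finite a priori so that $Q(\lambda)\lesssim K^{O(1)/q}+K^{-\delta}Q(\lambda)$ may be solved for $Q(\lambda)$. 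Controlling the curve and the anisotropically distorted measure \emph{simultaneously} under rescaling is precisely the point at which this argument replaces, and streamlines, the offspring-curve method.
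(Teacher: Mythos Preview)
Your proposal is correct and follows essentially the same strategy as the paper: reduce to curves in a fixed $C^{d+1}$-neighbourhood $\clag$ of $\gao$, obtain the $d$-linear estimate by combining Plancherel with the mollification bound $\int|\widehat F(\lambda\cdot)|^2\,d\mu\lesssim C_\mu\lambda^{-\alpha}\|F\|_{L^2}^2$ (the paper's Lemma~\ref{l2} and Proposition~\ref{l2fractal}), compute that the rescaled measure $(N_\tau^{\mathsf T})_*\mu$ satisfies \eqref{bmeasure} with constant $C_\mu K^{\beta(\alpha)}$ (the paper's Lemma~\ref{rescale}), and close a bootstrap on the induction quantity $Q(\lambda)$.

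The one noteworthy difference is in the decomposition step. The paper (Lemma~\ref{multidecomp}) introduces a chain of scales $1=A_0\gg A_1\gg\cdots\gg A_{d-1}$ and iteratively upgrades a $j$-fold product at scale $A_{j-1}$ to a $(j+1)$-fold product at the finer scale $A_j$, producing a sum of $d-1$ ``narrow'' terms at different scales plus one $d$-linear ``broad'' term. You instead work at a single scale $1/K$ and observe that if the significant intervals admit no $d$ pairwise $10/K$-separated members, then a greedy selection shows they lie in at most $d-1$ clusters of diameter $O(1/K)$, each containing $O(1)$ intervals, whence $|\tla f(x)|\lesssim\max_\tau|\tla f_\tau(x)|$. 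This one-dimensional combinatorial fact (an instance of the equality of independence number and clique-cover number for interval graphs, or just the greedy argument) lets you bypass the multi-scale hierarchy, giving a shorter route to the same inequality $Q(\lambda)\le C K^{-\delta}Q(\lambda)+C K^{O(1)}$. Both approaches are valid; yours is a genuine simplification at the cost of a small unstated lemma, while the paper's multi-scale decomposition is more self-contained and closer in spirit to the original Bourgain--Guth argument for hypersurfaces.
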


As $\alpha$ decreases the admissible range of $p,q$ gets larger (see
Figure \ref{fig1}). If $\alpha =d$, this extends Drury's result
\cite{drury} to general measures except for the end line case
$\beta(d)/q+1/p=1$. (Note that $\beta(d)=(d^2+d)/2$.) The condition
$d/{q}\le (1-1/p)$, $q\ge 2d$ is related to application of
Plancherel's theorem which gives $d$-linear estimates (see Lemma
\ref{l2}). The condition $\beta(\alpha)/q+1/p<1$ is sharp in that
there is a measure satisfying \eqref{bmeasure} but \eqref{frac}
fails if $\beta(\alpha)/q+1/p>1$ (see Appendix A). The restriction
$q> \beta(\alpha)+1$ also seems necessary in general even though at
present we  know it only in special cases. Note that $\beta(\alpha)>
d$ if $\alpha> 1$ and $\beta(\alpha)+1> 2d$ if $\alpha> 2$. Hence,
the assumption $d/{q}\le (1-1/p)$, $q\ge 2d$ is redundant when
$\alpha> 2$. In particular, when $\mu$ is the surface measure on a
compact smooth hypersurface $\Sigma\subset \mathbb R^d$ and $d\ge
3$, by rescaling the estimate \eqref{frac}  we get
\[\| T_1 f\|_{L^q(\lambda \Sigma)} \le C\|f\|_{L^p(I)}\]
provided that $(d+2)(d-1)/(2q)+1/p<1$ and $p\le q$. This can be seen
as a generalization of  $ L^p(S^1)$-$L^q(\lambda S^1)$ bound
\cite{bcsv} (also see \cite{grs} and \cite{bs} for related results)
for the extension operator from the circle $S^1$ in $\mathbb R^2$ to
the large circle $\lambda S^1$.

Our results here rely on the so-called \emph{multilinear approach}
which has been used to study the restriction problem for
hypersurfaces (\textit{cf.} \cite{becata, tavave}). Especially we
adapt the recent argument due to Bourgain and Guth \cite{bg} (also
see references therein) which was successful in deducing linear
estimate from multilinear one. For the space curves with non
vanishing torsion the sharp $d-$linear (extension) estimate is a
straightforward consequence of Plancherel's theorem under the
assumption that the support functions are separated (see Lemma
\ref{l2} and Lemma \ref{l2fractal}). Then it is crucial to control
$T_\lambda f$ by products of $T_\lambda f_1, \dots, T_\lambda f_d$
for which the supports of $f_i$ are separated from one another while
the remaining parts are bounded by a sum of  $|T_\lambda g|$ with
$g$ supported in a small interval. (See Lemma \ref{multidecomp}.)
Compared with \cite{bg} this is relatively simpler since we only
have to deal with one parameter separation in order to make use of
the multilinear estimate. To close induction we need to obtain
uniform estimates which do not depend on  particular choices of
curves. After proper normalization we can reduce the matter to
dealing with a class of curves which are close to a monomial curve.
An obvious byproduct of this approach is stability of estimates over
a family of curves (see Remark \ref{uniform}).

The estimates of the endpoint line case ($\beta(\alpha)/q +1/p =1$)
are not likely to be possible  with  general measures satisfying
\eqref{bmeasure}. 
But they still look plausible with specific measures which satisfy
certain regularity assumptions. However, these endpoint estimates
are  beyond the method of this paper. On the other hand, one may try
to use the method based on offspring curves \cite{drury} but a
routine adaptation of the presently known argument only gives
\eqref{frac} on a smaller range, namely $d/{q}\le (1-1/p)$,
$q\ge 2d$, $\beta(d)/q +1/p <1$ and $q > \beta(d) +1$.
%
%
\subsection*{\it Finite type curves}
There are also results when curves degenerate, namely the condition
\eqref{torsion} fails. Let us set  $\mba = (a_1,\cdots,a_d)$ with
positive integers $a_1, a_2, \cdots, a_d$ satisfying $a_1<a_2<
\cdots <a_d$. Then for $t\in I$ we also set
\begin{equation}\label{defm} \ma{t}= \begin{bmatrix} \gamma^{(a_1)}(t),& \gamma^{(a_2)}(t),
&\cdots,& \gamma^{(a_d)}(t) \end{bmatrix},
\end{equation}
where the column vectors $\gamma^{(a_i)}(t)$ are $a_i$--th
derivatives of $\gamma$. So, $\gamma$ is nondegenerate at $t$ if
$\det \ma{t} \neq 0$ with $\mba = (1,2,\cdots, d)$. We recall the
following definition which was introduced in \cite{christ}.

\begin{defn} Let $\gamma:I=[0,1] \to \mathbb {R}^d,\, d\ge 2 $ be a
smooth curve.
 We say that $\gamma$ is of finite
type at $t\in I$ if there exists $\mba = (a_1,\cdots,a_d)$ such that
$ \det\ma{t} \neq 0$. We also say that $\gamma$ is of finite type if
so is $\gamma$ at every $t\in I$.
\end{defn}

When degeneracy appears the boundedness of $T_\lambda^\gamma$ is no
longer the same so that \eqref{frac} holds only on a smaller set of
$p,q$.  When $\mu$ is the Lebesgue measure Christ \cite{christ}
obtained some sharp restriction estimates for the curves of finite
type on a restricted range. On the other hand, a natural attempt is
to recover the full range \eqref{opt} by introducing a weight which
mitigates bad behavior at degeneracy. In fact, let us consider the
estimate
\[ \| \tlaww {w} \|_{L^q(d\mu)} \leq C \lambda^{-\frac \alpha q}
\| f \|_{L^p (w dt)},\] where $\lambda\ge 1$ and
\begin{equation*}
\tlawf (x) = \int_I e^{i \lambda x \cdot \gamma(t) } f(t) w(t) dt.
\end{equation*}
The dual form of this estimate with $\lambda=1$ is
\begin{equation} \label{lafree}
\Big(\int_I
 |\widehat { g d\mu}(\gamma (t))|^{p'} w(t) dt\Big)^{1/p'} \le
C  \|g\|_{L^{q'}(d\mu)}.
\end{equation}

 There has been a long line of investigations on the estimate \eqref{lafree}
 \cite{DM1,DM2, D2, BO, BOS1,BOS2,BOS3,DeW,DeM}  when  $\mu$ is the Lebesgue measure and  $w dt$ is the affine
arclength measure.  When $d=2$, it was shown by Sj\"olin \cite{sj}
(also see \cite{ob}). In higher dimensions the study on
\eqref{lafree} was carried out by Drury and Marshall \cite{DM1},
\cite{DM2}. Drury \cite{D2}, Bak and Oberlin \cite{BO} obtained
partial results for specific classes of curves in $\mathbb R^3$. If
$I=\mathbb R$, by scaling the condition ${d(d+1)}/(2p')=1/q$ is
necessary for \eqref{lafree}. Wright and Dendrinos \cite{DeW}
obtained a uniform estimate for a class of polynomial curves on the
range $(d^2 + 2d )/2< q\le \infty$. This result was extended to a
larger region \cite{BOS3} (see Section 8). There is also a result
for the curves of which components are rational functions rather
than polynomials (see \cite{DeFW}). Bak, Oberlin and Seeger obtained
the estimates on the full range including the weak endpoint estimate
for the monomial curves and the curves of simple type \cite{BOS3}.
Dendrinos and M\"uller \cite{DeM} further extended this result to
the curves of small perturbation of monomial curves and for the
critical case $p=q={(d^2 + d +2)/ 2}$ the weak type endpoint
estimate also holds for these curves (see \emph{Remark} in Section 6
of \cite{BOS3}). The problem of obtaining \eqref{lafree} is now
settled for the finite type curves which are defined locally though
the uniform estimate is still open when curves are given on the
whole real line.

\ In what follows  we consider  $L^p$--$L^q$ estimate of $\tlaww
{\wei}$ with respect to the measure $\mu$ satisfying
\eqref{bmeasure}. Let us define a measure by setting
\begin{equation*}
w_\gamma^\alpha(t)dt  = | \det(\gamma'(t) , \gamma''(t), \cdots,
\gamma^{(d)}(t)) |^{\frac1{\beta(\alpha)}} dt.
\end{equation*}
When $\alpha=d$ this coincides with the affine arclength measure on
$\gamma$. Considering a monomial curve and a measure satisfying
 the homogeneity condition $\int g(\lambda x)
d\mu(x)=\lambda^{-\alpha}\int g(x) d\mu(x)$ (for example the measure
$\mu$ given in Appendix A), by rescaling  one can easily see that
the exponent $1/\beta(\alpha)$ is the correct choice in order that
the estimate \eqref{tlaw} holds for $p,q$ satisfying
$\beta(\alpha)/q + 1/p \le 1$. In \cite{BOS3} (see Section 2),  when
$\mu$ is the Lebesgue measure it was shown that  the optimal power
of torsion  is $1/\beta(d)=2/d(d+1)$ so that \eqref{lafree} holds
for $d(d+1)/(2q) + 1/p \le 1$. If we consider the induced Lebesgue
measure on  lower dimensional hyperplanes, this clearly shows that
our choice of $\beta(\alpha)$ is optimal at least if $\alpha$ is an
integer.

Our second result  reads as follows.

\begin{theorem}\label{finitethm}
Let $\gamma\in C^{\infty}(I)$ and $0<\alpha\le d$. Suppose that
$\mu$ satisfies \eqref{bmeasure} and $\gamma$ is of finite type.
Then, for $1\le p,q\le \infty$ satisfying $d/{q}\le (1-1/p)$, $q\ge
2d$ and $\beta(\alpha)/q + 1/p < 1$, $q
> \beta(\alpha) + 1$, there exists a constant $C$ such that
\begin{equation}\label{tlaw}
 \| \tlaww {\wei} \|_{L^q(d\mu)} \leq C \lambda^{-\frac \alpha q} \| f \|_{L^p (\wei
 dt)}.
\end{equation}
\end{theorem}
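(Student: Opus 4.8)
The plan is to deduce Theorem~\ref{finitethm} from Theorem~\ref{mainthm} by a localization-and-rescaling argument that exploits the finite type hypothesis. Since $\gamma$ is of finite type at every $t\in I$ and $I$ is compact, I would first use a covering argument: for each $t_0\in I$ there is a multi-index $\mba=\mba(t_0)$ with $\det M^{\gamma,\mba}(t_0)\neq 0$, hence $\det M^{\gamma,\mba}(t)\neq 0$ on a neighborhood of $t_0$; by compactness finitely many such neighborhoods cover $I$, so after a partition of unity in $t$ it suffices to prove \eqref{tlaw} with $f$ supported on a single subinterval $J$ on which some $\det M^{\gamma,\mba}$ is bounded away from $0$. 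On such $J$, the zero set of $\det(\gamma',\dots,\gamma^{(d)})$ is contained in the zero set of finitely many real-analytic-type quantities controlled by the non-vanishing minor; more concretely, on $J$ the torsion $\det(\gamma'(t),\dots,\gamma^{(d)}(t))$ vanishes to finite order at each of its (finitely many) zeros, with order and location uniformly controlled. So I may further decompose $J$ into finitely many subintervals, on each of which the torsion vanishes (to finite order) only at one endpoint, say $t=0$ after translation.

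The core step is a dyadic decomposition around the zero of the torsion. On the subinterval where the torsion vanishes only at $t=0$, write $J=\bigcup_k J_k$ with $J_k\sim\{t: 2^{-k-1}\le t\le 2^{-k}\}$, and on each $J_k$ perform the anisotropic rescaling $t\mapsto 2^{-k}t$ adapted to the finite-type structure: this is exactly the normalization alluded to in the introduction that brings the curve close to a monomial curve $(t^{a_1},\dots,t^{a_d})$ and on which $\det M^{\gamma,\mba}$ is $\approx 1$ with uniform $C^{d+1}$ bounds. Under this rescaling $T_\lambda^\gamma[w_\gamma^\alpha, f]$ restricted to $J_k$ becomes (up to harmless affine changes of the $x$ variable, which \eqref{bmeasure} is invariant under up to constants) an operator of the form $T_{\lambda_k}^{\widetilde\gamma}$ with $\widetilde\gamma$ nondegenerate and uniformly $C^{d+1}$, for a new parameter $\lambda_k\ge 1$. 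The weight $w_\gamma^\alpha\,dt = |\det(\gamma',\dots,\gamma^{(d)})|^{1/\beta(\alpha)}\,dt$ is designed precisely so that its Jacobian factor under this rescaling exactly matches the power $\lambda^{-\alpha/q}$ gain demanded by \eqref{tlaw}: this is the scaling computation that fixes the exponent $1/\beta(\alpha)$, already indicated in the text via the homogeneity heuristic. Applying Theorem~\ref{mainthm} (with its uniformity over the family of normalized curves, Remark~\ref{uniform}) to each rescaled piece gives \eqref{tlaw} on $J_k$ with a constant independent of $k$ and with the correct power of $\lambda_k$.

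Finally I would sum over $k$. Because we are working with a fixed $p\le q$ (from $d/q\le 1-1/p$) and the right-hand side is measured in $L^p(w_\gamma^\alpha\,dt)$, the pieces on the left can be reassembled using the triangle inequality in $\ell^q$ (or embedding $\ell^p\hookrightarrow\ell^q$) over the $L^q(d\mu)$ norms after checking that the overlaps of the images of $J_k$ under the map $x\mapsto$ (rescaled $x$) interact with $\mu$ in an $\ell^q$-summable way — here \eqref{bmeasure} is used to control how $\mu$-mass is distributed among the dyadically-scaled regions, and the strict inequality $\beta(\alpha)/q+1/p<1$ provides the small exponent gap that makes the geometric series in $k$ converge. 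I expect the main obstacle to be exactly this summation: ensuring the per-scale constants from Theorem~\ref{mainthm} are genuinely $k$-uniform (which forces careful bookkeeping of how the anisotropic rescaling acts on the $C^{d+1}$ norm of $\gamma$ and on the constant $C_\mu$ in \eqref{bmeasure}), and verifying that the dyadic pieces can be recombined without loss — in particular that the finitely-many-zeros reduction and the partition of unity do not destroy the sharpness. The rest is the scaling algebra, which is routine once the normalization is set up.
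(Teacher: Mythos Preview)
Your proposal is correct and follows essentially the same approach as the paper: compactness plus the finite-type hypothesis reduces to small intervals on which an anisotropic normalization (via $M_\tau^{\gamma,\mba}$ and $D_h^\mba$) places the curve in a class $\mathfrak{G}^\mba(\epsilon)$ close to a monomial curve with degeneracy only at $t=0$, then a dyadic decomposition $[2^{-k-1},2^{-k}]$ and a second rescaling make each piece uniformly nondegenerate so that Theorem~\ref{mainthm} in its uniform form (Remark~\ref{uniform}) applies, and the strict inequality $\beta(\alpha)/q+1/p<1$ gives the geometric decay needed to sum in $k$. Your worry about the measure-side bookkeeping is exactly what Lemma~\ref{rescale} handles: the rescaled measure stays in $\mathfrak{B}(\alpha,1)$ up to the explicit factor $|h|^{-\beta(\alpha)}$, which is already absorbed into the per-scale gain $2^{-k\sigma(1-\beta(\alpha)/q-1/p)}$.
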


This generalizes the previous results to general measures except for
$p,q$ which are on the end line. Thanks to the finite type
assumption a suitable normalization by a finite decomposition and
rescaling reduce the problem to the case of monomial type curves of
which degeneracy only appears a single point. Further decomposition
away from the degeneracy enables us to obtain the desired
estimate \eqref{tlaw} by relying on the stability of estimates
for non-degenerate curves.

\smallskip

The paper is organized as follows. In Section 2 we prove Theorem
\ref{mainthm}. In Section 3 we give the proof of Theorem
\ref{finitethm} which is based on Theorem \ref{mainthm}. Sharpness
of the condition $\beta(\alpha)/q + 1/p < 1$ will be shown in
Appendix A.

\section{Proof of Theorem \ref{mainthm} }

The proof of Theorem \ref{mainthm} is based on an adaptation of
Bourgain-Guth argument in \cite{bg}, which relies on a multilinear
estimate and  uniform control of estimates over classes  of curves
and measures. This requires proper normalization of them.

\subsection*{\it Normalization of curves} For $a,b\in \mathbb
R$, $a\neq b$, we set
\[[a,b]^*=\begin{cases} \,\, [a,b] \,\, \text{ if } a<b,\\ \,\, [b,a] \,\, \text{ if } b<a. \end{cases}\]
Let $\gamma\in C^{d+1}(I)$ satisfying \eqref{torsion}, and let $\tau
\in I$ and $h$ be a real number  such that $[\tau,\tau+h]^*\subset
I$. Then let us define a $d\times d$ matrix $M_\tau^\gamma$
and a diagonal matrix $ D_h$ by
\begin{align*}\label{transform}
M_\tau^\gamma& = M_\tau^{\gamma, (1,2,\dots, d)}= (
\gamma'(\tau),\gamma''(\tau),\cdots,\gamma^{(d)}(\tau) ),\\
\quad D_h&=(he_1, h^2e_2, \dots, h^{d} e_d).
\end{align*}
We also set
\begin{equation}
\label{normalcur} \gth(t)= D_h^{-1}(M_\tau^\gamma)^{-1
}(\gamma(ht+\tau)-\gamma(\tau)).
\end{equation}
Then it follows that
\begin{equation}\label{normalcurve}
x\cdot(\gamma(ht+\tau)-\gamma(\tau))=D_h (M_\tau^\gamma)^t x\cdot
\gamma_\tau^h(t).
\end{equation}

  Let us set
\[\gamma_\circ(t)=\Big(t,\frac{t^2}{2!},\dots, \frac{t^d}{d!}\,\Big). \]
For a given $\epsilon>0$ we define the class $\mathfrak G(\epsilon)$
of curves   by setting
\[
\mathfrak G(\epsilon)= \Big\{\gamma \in C^{d+1}(I) :
\|\gamma-\gao \|_{C^{d+1}(I)}\le \epsilon \Big\}.
\]

\begin{lemma} \label{normalization}
Let  $\gamma\in C^{d+1}(I)$ satisfying \eqref{torsion} and let
$\tau\in I$. Then, for $\epsilon>0$ there is a {constant} $\delta>0$
such that $\gth\in \mathfrak G(\epsilon)$ whenever
$[\tau,\tau+h]^*\subset I$ and $0<|h|\le \delta$.
\end{lemma}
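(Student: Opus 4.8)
The plan is to compute the Taylor expansion of $\gth(t)$ around $t=0$ and show that, after the normalization built into \eqref{normalcur}, the low-order Taylor coefficients of $\gth$ agree exactly with those of $\gao$, while the remaining (higher-order) terms carry positive powers of $h$ and hence become small in $C^{d+1}$ norm as $h\to 0$. First I would expand $\gamma(ht+\tau)-\gamma(\tau)$ in a Taylor series: since $\gamma\in C^{d+1}(I)$, we may write
\[
\gamma(ht+\tau)-\gamma(\tau)=\sum_{k=1}^{d}\frac{h^k t^k}{k!}\,\gamma^{(k)}(\tau)+R_{d+1}(t,h),
\]
where the remainder $R_{d+1}$ satisfies $\|R_{d+1}(\cdot,h)\|_{C^{d+1}(I)}=O(|h|^{d+1})$ uniformly (the $C^{d+1}$ norm in $t$ is controlled because each $t$-derivative of the remainder still picks up at least one extra factor of $h$ relative to its order, using the integral form of the remainder and the bound on $\gamma^{(d+1)}$). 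Applying $D_h^{-1}(M_\tau^\gamma)^{-1}$, the $k$-th term becomes $\frac{t^k}{k!}D_h^{-1}(M_\tau^\gamma)^{-1}h^k\gamma^{(k)}(\tau)$; since the columns of $M_\tau^\gamma$ are exactly $\gamma'(\tau),\dots,\gamma^{(d)}(\tau)$, we have $(M_\tau^\gamma)^{-1}\gamma^{(k)}(\tau)=e_k$, and $D_h^{-1}h^k e_k=e_k$. Therefore the polynomial part of $\gth(t)$ is precisely $\sum_{k=1}^d \frac{t^k}{k!}e_k=\gao(t)$, and
\[
\gth(t)-\gao(t)=D_h^{-1}(M_\tau^\gamma)^{-1}R_{d+1}(t,h).
\]

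The second step is to bound this error in $C^{d+1}(I)$. Here the key point is that $D_h^{-1}$ amplifies the $j$-th component by $h^{-j}$, so we must check that the $j$-th component of $(M_\tau^\gamma)^{-1}R_{d+1}(t,h)$ is $O(|h|^{d+1})$ with a constant independent of anything but $\gamma$ and $d$ — then the worst amplification is $|h|^{-d}$, leaving a net factor $O(|h|)$, which goes to zero. Since $\gamma$ satisfies \eqref{torsion} at $\tau$, $\det M_\tau^\gamma\neq 0$, but to get \emph{uniform} smallness over the segment we only need $(M_\tau^\gamma)^{-1}$ to have operator norm bounded in terms of $\gamma$ (true since $\gamma\in C^{d+1}$ and \eqref{torsion} holds on the compact $I$, so $|\det M_\tau^\gamma|$ is bounded below and the entries are bounded above, uniformly in $\tau$). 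Combining the uniform remainder estimate with these bounds yields $\|\gth-\gao\|_{C^{d+1}(I)}\le C_\gamma |h|$ for all admissible $\tau,h$; choosing $\delta=\epsilon/C_\gamma$ finishes the proof.

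I expect the main obstacle to be the bookkeeping in the $C^{d+1}$ remainder estimate: one has to verify that every $t$-derivative up to order $d+1$ of $D_h^{-1}(M_\tau^\gamma)^{-1}R_{d+1}(t,h)$ is $O(|h|)$, and the naive term-by-term bound is delicate because $\partial_t^{j}$ of the degree-$(d+1)$-and-higher Taylor tail brings down only $h^{d+1}$ while $D_h^{-1}$ divides by as much as $h^d$ — so the margin is exactly one power of $h$, with no room to spare, and one must be careful that the constant is genuinely uniform in $\tau$. A clean way to organize this is to apply Taylor's theorem with the integral form of the remainder, $R_{d+1}(t,h)=\frac{h^{d+1}t^{d+1}}{d!}\int_0^1(1-s)^d\gamma^{(d+1)}(sht+\tau)\,ds$, from which $\partial_t^{j}R_{d+1}$ is manifestly $O(|h|^{d+1})$ for $0\le j\le d+1$ with constant depending only on $\|\gamma^{(d+1)}\|_{C^1(I)}$ (using $|h|\le 1$), and then divide by $D_h$ componentwise. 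This makes the single surviving power of $h$ transparent and the uniformity automatic.
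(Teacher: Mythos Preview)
Your argument is essentially identical to the paper's: Taylor expand $\gamma(ht+\tau)-\gamma(\tau)$, observe that the polynomial part becomes exactly $\gao$ after applying $D_h^{-1}(M_\tau^\gamma)^{-1}$, and bound the transformed remainder by $C\|(M_\tau^\gamma)^{-1}\|\,|h|$, with uniformity in $\tau$ coming from compactness of $I$ together with \eqref{torsion}. One minor caveat: differentiating the integral remainder formula directly in $t$ would call for $\gamma^{(d+2)}$, so to stay within the stated $C^{d+1}$ hypothesis compute $\partial_t^{j}R_{d+1}$ by first differentiating the defining expression and then applying Taylor's theorem to $\gamma^{(j)}$, which gives $\partial_t^{j}R_{d+1}(t,h)=\frac{h^{d+1}t^{d+1-j}}{(d-j)!}\int_0^1(1-s)^{d-j}\gamma^{(d+1)}(\tau+sht)\,ds$ for $0\le j\le d$ and $\partial_t^{d+1}R_{d+1}=h^{d+1}\gamma^{(d+1)}(ht+\tau)$, so the $O(|h|^{d+1})$ bound needs only $\|\gamma^{(d+1)}\|_{L^\infty(I)}$ rather than $\|\gamma^{(d+1)}\|_{C^1(I)}$.
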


For a given matrix $M$,  $\| M \| $ denotes the usual matrix norm
$\max_{|x|=1} | Mx| $.

\begin{proof} It is enough to consider the case $[\tau,\tau+h]\subset
I$. The other case $[h+\tau,\tau]\subset I$ can be shown similarly.
By Taylor's expansion
\begin{align*}
\gamma(ht+\tau)-\gamma(\tau)
&=\gamma^{'}\!(\tau)ht+\gamma^{''}\!(\tau)h^2\frac{t^2}{2!}+\dots+
\gamma^{(d)}\!(\tau)h^d\frac{t^d}{d!}+ \mathcal E(\tau,h,t)\\
&=M_\tau^\gamma D_h \gao(t)+\mathcal E(\tau,h,t)
\end{align*}
with $\|\mathcal E(\tau,h,t)\|_{C^{d+1}(I)}\le Ch^{d+1}$ uniformly
in $\tau$. Since $\gth(t)= \gao(t)+( M_\tau^\gamma D_h)^{-1}$
$\mathcal E(\tau,h,t),$
\begin{equation}\label{error}
\|\gth-\gao\|_{C^{d+1}(I)}\le C\|(M_\tau^\gamma)^{-1}\|h.
\end{equation}
By continuity $\|(M_\tau^\gamma)^{-1}\|$ is uniformly
 bounded along $\tau\in I$ by a constant $B$ because $\gamma$ satisfies
\eqref{torsion} and $\gamma\in C^{d+1}(I)$. Taking
$\delta=\epsilon/(2CB)$, we see $\gth\in \clag$.
\end{proof}

\begin{rmk}\label{interval} Let $J\subset I$. From the proof of Lemma \ref{normalization} it is
clear that if $\|\gamma\|_{C^{d+1}(J)}\le B_1$ and
$\|(M_\tau^\gamma)^{-1}\|\le B_2$ for all $\tau \in J$, then for
$\epsilon>0$ there is a $\delta=\delta(B_1,B_2)>0$ such that
$\gth\in \clag$ provided that $[\tau,\tau+h]^*\subset J$ and
$0<|h|\le \delta$.
\end{rmk}

\subsection*{\it Rescaling of measures} For $M>0$ we denote
by $\mathfrak{B} (\alpha, M)$ the set of positive Borel measures
which satisfy \eqref{bmeasure} with $C_\mu=M$. If $\sigma\in
\mathfrak{B} (\alpha, M)$, $\sigma$ is finite on all compact sets in
$\Real^d$ by $\eqref{bmeasure}$. Hence,  $\sigma$ is a Radon measure
because $\Real^d$ is locally compact Hausdorff space. (See Theorem
7.8 in \cite{Fol}.)

For $\mba = (a_1,\cdots,a_d)$ let us define
\begin{equation}\label{diagonal}
D_h^\mba=(h^{a_1}e_1, h^{a_2}e_2, \dots, h^{a_d}e_d).
\end{equation}
Let $\sigma\in \mathfrak{B} (\alpha, M)$, $0< |h| < 1$, and let $A$
be a  $d\times d$ nonsingular matrix. Then the map  $F\to \int F(
D_h^\mba A\, x) d \sigma (x)$ defines a positive  linear functional
on $C_c(\mathbb R^d)$. By the Riesz representation theorem there
exists a unique Radon measure $\siga$ such that
\begin{equation}\label{normalmeasure}
\int F(x) \dsiga(x) =\int F( D_h^\mba A\, x) d \sigma (x)
\end{equation}
for any compactly supported continuous function $F$.

\begin{lemma}\label{rescale} Let\, $\mba =
(a_1,\dots,a_d)$ and $a_1, a_2, \dots, a_d$ satisfy that $0<a_1<a_2<
\ldots <a_d$ and $a_i\ge i$. If $\sigma\in \mathcal B(\alpha, M)$
and $A$ is a nonsingular matrix, then $\siga$ is also a Borel
measure which satisfies
\begin{equation}\label{decom}
\siga (B (x,\rho)) \leq C M\|A^{-1} \|^\alpha
|h|^{\frac{d^2+d}{2}-\beta(\alpha)-\sum_{i=1}^d a_i} \rho^\alpha
\end{equation}
for $(x,\rho)\in \mathbb R^d\times\mathbb R_+$. Here $C$ is
independent of $h, A$.
\end{lemma}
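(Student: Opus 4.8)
The plan is to unwind the definition of $\siga$ and reduce \eqref{decom} to an elementary covering estimate. By \eqref{normalmeasure}, $\siga(E)=\sigma\big((D_h^{\mba}A)^{-1}E\big)$ for every Borel set $E$; applied to a ball this gives $\siga(B(x_0,\rho))=\sigma\big(A^{-1}x_0'+A^{-1}\mathcal E\big)$, where $x_0'=(D_h^{\mba})^{-1}x_0$ and $\mathcal E:=(D_h^{\mba})^{-1}B(0,\rho)=\{\,y\in\Real^d:\ \sum_i|h|^{2a_i}y_i^2\le\rho^2\,\}$ is an ellipsoid with axes parallel to the coordinate axes and half-lengths $\rho_i:=|h|^{-a_i}\rho$. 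Since $0<|h|<1$ and $a_1<\cdots<a_d$, we have $\rho\le\rho_1<\cdots<\rho_d$, so $\mathcal E\subset R:=\prod_{i=1}^{d}[-\rho_i,\rho_i]$, and it suffices to bound the $\sigma$--measure of a translate of $A^{-1}R$.

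The key step is a covering argument depending on a parameter $k\in\{1,\dots,d\}$. Cover $R$ by translates of the box $R_k:=\prod_{i\le k}[-\rho_i,\rho_i]\times\prod_{i>k}[-\rho_k,\rho_k]$. Since $\rho_i\le\rho_k$ for $i\le k$, only the directions $i>k$ require more than one translate, so $R$ is covered by at most $2^{d}\prod_{i>k}(\rho_i/\rho_k)=2^{d}|h|^{(d-k)a_k-\sum_{i=k+1}^{d}a_i}$ translates of $R_k$. Each $R_k$ has diameter at most $2\sqrt d\,\rho_k$, so the $A^{-1}$--image of any translate of $R_k$ is contained in a ball of radius at most $2\sqrt d\,\|A^{-1}\|\rho_k$, whose $\sigma$--measure is at most $(2\sqrt d)^{d}M\|A^{-1}\|^{\alpha}\rho_k^{\alpha}$ by \eqref{bmeasure} (here $\alpha\le d$). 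Summing over the covering and substituting $\rho_k=|h|^{-a_k}\rho$ yields, for every $k\in\{1,\dots,d\}$,
\[
\siga(B(x_0,\rho))\ \le\ C_d\,M\,\|A^{-1}\|^{\alpha}\,\rho^{\alpha}\,|h|^{E_k},\qquad E_k:=a_k(d-k-\alpha)-\sum_{i=k+1}^{d}a_i,
\]
with $C_d$ depending only on $d$; since $|h|<1$, we may use the $k$ making $E_k$ largest.

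It then remains to choose $k$ and verify the exponent. Writing $\beta(\alpha)=(j+1)\alpha+\tfrac{(d-j-1)(d-j)}{2}$ with $d-j-1<\alpha\le d-j$, I will take $k=j+1\in\{1,\dots,d\}$. Regrouping terms and using $d-j-\alpha\ge0$ together with the hypothesis $a_i\ge i$ (hence $\sum_{i=1}^{j}a_i\ge\tfrac{j(j+1)}{2}$ and $a_{j+1}\ge j+1$), one gets
\[
E_{j+1}+\sum_{i=1}^{d}a_i=\sum_{i=1}^{j}a_i+a_{j+1}(d-j-\alpha)\ \ge\ \tfrac{j(j+1)}{2}+(j+1)(d-j-\alpha)=\tfrac{d^2+d}{2}-\beta(\alpha),
\]
the last equality being a one-line algebraic identity (both sides equal $(j+1)(d-\tfrac j2-\alpha)$). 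Hence $E_{j+1}\ge\tfrac{d^2+d}{2}-\beta(\alpha)-\sum_{i=1}^{d}a_i$, and since $|h|<1$ this is exactly the claimed inequality \eqref{decom}; Borel measurability of $\siga$ was already recorded when it was produced by the Riesz representation theorem. The only real difficulty is locating the right box shape $k=j+1$ and checking that the exponent it produces matches the definition of $\beta(\alpha)$ — the remaining geometry of boxes, ellipsoids and balls is routine.
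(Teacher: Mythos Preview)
Your proof is correct and follows essentially the same covering-argument strategy as the paper. The only cosmetic differences are that the paper separates the $A$- and $D_h^{\mba}$-parts of the transformation and uses a two-step covering (first to boxes with sides $|h|^{-i}\rho$ via $a_i\ge i$, then to cubes of side $|h|^{-(j+1)}\rho$), whereas you handle both maps at once and cover $R$ directly by boxes of maximal side $\rho_{j+1}=|h|^{-a_{j+1}}\rho$; both routes yield the same exponent after invoking $a_i\ge i$.
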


\begin{proof}
Let $d-j-1<\alpha\le d-j$ for  some $j=0,\dots, d-1$. By translation
we may assume $x=0$. To show \eqref{decom}, we consider $\siga$ as a
measure which is given by composition of two transformations on
$\sigma$.

We first consider the  measure $\sigma_A$ defined by
\[
\int F(x) d\sigma_A = \int F(A x) d\sigma(x), \quad F\in C_c(\mathbb R^d) \] with a nonsingular
matrix $A$. Then it follows that
\[
\sigma_A(B(0,\rho)) \le  M \| \omega \|^\alpha \rho^\alpha,
\]
where $\|\omega\|= \max_k |\omega_k|$ and $\omega_1,\dots,\omega_d$
are the column vectors of $A^{-1}$. In fact, $A x \in B(0,\rho)$
implies that $x$ can be written as a linear combination of
$\omega_k$ with a coefficient vector in $B(0,\rho)$, i.e. $x  = z_1
w_1 + \cdots +z_d \omega_d$ with $(z_1,\dots,z_d)\in B(0, \rho)$.
Since $\sigma \in \mathfrak B(\alpha,M)$,  we have
\begin{equation}\label{sigma_A}
\sigma_A(B(0,\rho)) = \int \chi_{B(0,\rho)} (Ax ) d\sigma(x) \le C
\int \chi_{B(0, d\|\omega\|\rho)} (y ) d\sigma(y) \le C M (\|
\omega\|\rho)^\alpha.
\end{equation}

Let us define a measure $\sigma_h^\mba$  by
\[\int F(x) d
\sigma_h^\mba = \int F(D_h^\mba x) d\sigma,\]
and claim that
\begin{equation}\label{sigma_h^a}
\sigma_h^\mba (B (0,\rho))\le C M |h|^{\frac{d^2+d}{2}-\sum_{i=1}^d
a_i-\beta(\alpha)}\rho^\alpha.
\end{equation}
Since $\{x: Ax\in B(0,\rho)\}\subset B(0, d\|\omega\|\rho)$, by
\eqref{sigma_A} and \eqref{sigma_h^a} it follows that
\begin{align*}
\siga ( B(0,\rho)) &= \int \chi_{B(0,\rho)} (A x) d\sigma_h^\mba(x)
\le C \sigma_h^\mba( B(0, d\|\omega\|\rho)) \\
&\le C M  \|\omega\|^\alpha |h|^{\frac{d^2+d}{2}-\sum_{i=1}^d
a_i-\beta(\alpha)}\rho^\alpha,
\end{align*}
and therefore we get \eqref{decom}.

Now it remains to show \eqref{sigma_h^a}. Let us set $$S=\{y:
D_h^\mba y \in B(0,\rho)\}.$$ Then, if $y=(y_1,\dots, y_d)\in S$ we
have $| y_i | \le |h|^{- a_i}\rho$. Hence, $S$ is contained in  the
rectangle $\mathcal{R}$ of dimension $|h|^{-a_1} \rho \times
|h|^{-a_2}\rho \times \cdots \times  |h|^{-a_d} \rho$. So, $
\mathcal{R} $ can be covered by as many as $O(|h|^{-a_1+1} \times
|h|^{-a_2+2} \times \cdots \times |h|^{-a_d + d})$ rectangles
$\mathcal R'$ of which dimension is $|h|^{-1} \rho\times
 |h|^{-2} \rho\times \dots \times  |h|^{-d}$
while each $\mathcal R'$ is covered by $O(1\times\cdots\times1\times
|h|^{-1} \times \cdots \times |h|^{-(d-j-1)})$ cubes of sidelength $
|h|^{-j-1} \rho$. Hence $\mathcal R$ is covered by cubes
$\mathcal{B}_1, \dots, \mathcal B_{l}$ of sidelength $ |h|^{-j-1}
\rho$ with $l \lesssim |h|^{(\frac{d^2+d}{2}-\sum_{i=1}^d
a_i-\frac{(d-j-1)(d-j)}{2})}$. Hence,
\begin{align*}
&\sigma_h^\mba (B(0, \rho)) \le \int \chi_{B(0,\rho)} ( D_h^\mba y)
d\sigma(y) \leq \int \chi_{\mathcal R} (y) d\sigma(y)\\
&\le \sum_{i=1}^l \int \chi_{\mathcal B_i} (y) d\sigma(y)  =  \sum_{i=1}^l \sigma(\mathcal{B}_i)
\lesssim M \sum_{i=1}^l  |h|^{ -(j+1)\alpha} \rho^\alpha\\
& \le C M |h|^{(\frac{d^2+d}{2}-\sum_{i=1}^d
a_i-\frac{(d-j-1)(d-j)}{2}-(j+1)\alpha)}\rho^\alpha.
\end{align*}
This gives the desired inequality since
$(j+1)\alpha+{(d-j-1)(d-j)}/{2}=\beta(\alpha)$.
\end{proof}

\subsection*{\it Multilinear $(d-linear)$ estimates with separated supports} We now prove a multilinear estimate with respect
to general measures, which is basically a consequence of
Plancherel's theorem. We also show that the estimates are uniform
along $\gamma\in \clag$ if $\epsilon>0$ is small enough.

Let us define a map $\Gamma_\gamma: I^d\to \mathbb R^d$ by
\[
\Gamma_\gamma(\mbt)=\sum_{i=1}^d \gamma(t_i),
\]
where $\mbt = (t_1, t_2, \cdots , t_d)$.

\begin{lemma}\label{1-1} Let $E=\{\mbt\in I^d:  \, t_1<
t_2< \ldots < t_d\}$.  If $\epsilon>0$ is sufficiently small, then
the map $\mbt: E\to \Gamma_\gamma(\mbt)$ is one to one for all
$\gamma\in \clag$.
\end{lemma}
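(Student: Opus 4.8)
The plan is to settle the model curve $\gao$ first and then transfer injectivity to the whole class $\clag$ by a compactness argument combined with a local analysis. For $\gamma=\gao$ the $m$-th component of $\Gamma_{\gao}(\mathbf t)$ equals $\tfrac1{m!}\sum_{i=1}^d t_i^m$, a fixed multiple of the power sum $p_m(\mathbf t)$. Hence $\Gamma_{\gao}(\mathbf t)=\Gamma_{\gao}(\mathbf s)$ forces $p_m(\mathbf t)=p_m(\mathbf s)$ for $m=1,\dots,d$, and since each of $\mathbf t,\mathbf s$ has exactly $d$ entries, Newton's identities recover the elementary symmetric functions $e_1,\dots,e_d$ from $p_1,\dots,p_d$; thus $\prod_i(x-t_i)=\prod_i(x-s_i)$, the multisets $\{t_i\}$ and $\{s_i\}$ coincide, and being increasing, $\mathbf t=\mathbf s$. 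So $\Gamma_{\gao}$ is one to one on the \emph{closed} simplex $\overline E=\{t_1\le\cdots\le t_d\}$, which is what we shall perturb.

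Next I would record a stability fact about the relevant determinants. The leading Wronskians $W_k=W(1,\gamma_1,\dots,\gamma_k)$, $k=0,\dots,d$, depend continuously on $\gamma\in C^{d+1}(I)$; for $\gamma=\gao$ each $W_k$ is a nonzero constant, and $W_d$ equals, up to sign, the determinant $\det(\gamma',\dots,\gamma^{(d)})$ of \eqref{torsion}. Shrinking $\epsilon$ if necessary, all $W_k$ therefore stay bounded away from $0$ on $I$ for every $\gamma\in\clag$, so by P\'olya's criterion $\{1,\gamma_1,\dots,\gamma_d\}$ is an extended complete Chebyshev system on $I$, uniformly over $\clag$. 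Two consequences: the differential $D\Gamma_\gamma(\mathbf t)=(\gamma'(t_1),\dots,\gamma'(t_d))$ is invertible whenever $t_1,\dots,t_d$ are distinct; and, more generally, for distinct $\sigma_1<\cdots<\sigma_r$ with $\sum_l m_l=d$ the confluent Vandermonde matrix with columns $\gamma^{(1)}(\sigma_l),\dots,\gamma^{(m_l)}(\sigma_l)$, $l=1,\dots,r$, is nonsingular, again with bounds uniform over $\clag$.

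Now suppose the lemma fails: there are $\epsilon_n\to0$, $\gamma_n\in\mathfrak G(\epsilon_n)$ and $\mathbf t_n\ne\mathbf s_n$ in $E$ with $\Gamma_{\gamma_n}(\mathbf t_n)=\Gamma_{\gamma_n}(\mathbf s_n)$. Passing to subsequences, $\mathbf t_n\to\mathbf t_*$ and $\mathbf s_n\to\mathbf s_*$ in $\overline E$ and $\gamma_n\to\gao$ in $C^{d+1}$, whence $\Gamma_{\gao}(\mathbf t_*)=\Gamma_{\gao}(\mathbf s_*)$ and so $\mathbf t_*=\mathbf s_*$ by the first paragraph. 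If $\mathbf t_*$ lies in the open simplex $E$, then by the stability fact $D\Gamma_{\gamma_n}$ is invertible near $\mathbf t_*$ with norm and inverse norm bounded uniformly in $n$; the quantitative inverse function theorem then makes each $\Gamma_{\gamma_n}$ injective on a ball $B(\mathbf t_*,r)\subset E$ with $r$ independent of $n$, and since $\mathbf t_n,\mathbf s_n\in B(\mathbf t_*,r)$ for large $n$ this contradicts $\mathbf t_n\ne\mathbf s_n$.

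The remaining case $\mathbf t_*\in\partial E$ is the crux. Here the coordinates of $\mathbf t_*$ form $r<d$ clusters about distinct values $\sigma_1<\cdots<\sigma_r$ of sizes $m_1,\dots,m_r$, and (after a further subsequence) for large $n$ the coordinates of $\mathbf t_n$ and of $\mathbf s_n$ split the same way. Rescaling all clusters by a common scale $h_n$ (say the largest cluster diameter) and Taylor-expanding $\gamma_n$ about the $\sigma_l$, the identity $\sum_i[\gamma_n(t_{n,i})-\gamma_n(s_{n,i})]=0$, divided by $h_n$, becomes an identity whose leading part is governed by the confluent Vandermonde of the stability fact; its uniform nonsingularity, together with the uniform bounds, forces the rescaled cluster configurations of $\mathbf t_n$ and $\mathbf s_n$ to agree in the limit. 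In other words one runs the compactness argument again, now in the blown-up (stratified) coordinates adapted to $\mathbf t_*$, and iterating down through the clusters contradicts $\mathbf t_n\ne\mathbf s_n$. I expect this multi-scale bookkeeping near $\partial E$ to be the only real obstacle; away from it the argument is entirely soft.
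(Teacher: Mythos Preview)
Your route is genuinely different from the paper's. The paper (in its appendix) never runs a compactness argument: it reduces injectivity of $\Gamma_\gamma$ on $E$ to \emph{total positivity}---the single-signedness of every minor of the Jacobian $(\gamma'(t_1),\dots,\gamma'(t_d))$ on $E$---via the argument of Steinig and Drury--Marshall, and then establishes this minor positivity uniformly over $\mathfrak G(\epsilon)$ by an explicit recursion yielding a quantitative lower bound. Your step~2 already contains the seed of that proof: the Wronskian positivity you record for $\{1,\gamma_1,\dots,\gamma_d\}$, once extended to every subsystem $\{1,\gamma_{i_1},\dots,\gamma_{i_k}\}$ (equally immediate by perturbation from $\gamma_\circ$, whose components are monomials), gives exactly the total positivity the Steinig argument needs, and injectivity would follow at once. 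You do not draw this conclusion; you extract only local invertibility and confluent-Vandermonde nondegeneracy, and turn to compactness instead.

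The compactness argument has a real gap at step~5, which you yourself flag. Two issues are not addressed. First, decoupling the clusters via the confluent Vandermonde is subtler than your sketch suggests: when clusters have unequal sizes, the cross terms from a cluster of size $m_{l'}$ enter the equations for a larger cluster at order $h_n^{m_{l'}+1}$, which can match the main term $h_n^{j}$ for $j>m_{l'}$; a correct treatment needs a bootstrap (resolve the low-$j$ equations first and feed the improved bounds back into the higher ones). Second, termination of ``iterating down through the clusters'' is not argued; one needs to observe, for instance, that rescaling by the maximal cluster diameter forces that cluster's blown-up limit to have diameter~$1$, hence to split, so the total cluster count strictly increases and the depth is bounded by $d-1$. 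Both points look repairable, but as written the boundary case is only a program. The cleaner fix is to bypass compactness altogether: strengthen step~2 to all subsystems and invoke Steinig, which is essentially the paper's proof.
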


This can be shown by the argument in \cite{DM2} which relies
on {\it total positivity} (also see \cite{DeW}). In fact, we need
to show that total positivity is valid on $I$ regardless of
$\gamma\in \clag$. It is not difficult by making use of the fact
that $\gamma$
is a small perturbation of $\gamma_\circ$. 
We give a proof of Lemma \ref{1-1} in Appendix \ref{appendb}.

Now, the following is a straightforward consequence of Plancherel's
theorem.

\begin{lemma}\label{l2} Let $\gamma \in \clag$ and $\mathcal I_1,\dots, \mathcal I_d$
be closed intervals  contained in $I$ which satisfy $\min_{i\neq
j}\dist(\mathcal I_i, \mathcal I_j)\ge L>0$. If $\epsilon>0$ is
sufficiently small, then there is a constant $C$, independent of
$\gamma$, such that
\[
 \| \prod_{i=1}^dT_\lambda^\gamma f_i \|_{L^2}\le CL^{-\frac{d^2-d}{4}}\lambda^{-\frac d 2}   \prod_{i=1}^d  \| f_i \|_{L^2}
\]
whenever $f_i$ is supported in $\mathcal I_i$, $i=1,2,\dots, d$.
\end{lemma}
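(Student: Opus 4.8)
The plan is to reduce the $d$-linear $L^2$ bound to a change of variables in the Fourier transform of a product, exactly as in the classical argument for the moment curve. First I would write
\[
\prod_{i=1}^d T_\lambda^\gamma f_i(x)=\int_{I^d} e^{i\lambda x\cdot\Gamma_\gamma(\mbt)}\prod_{i=1}^d f_i(t_i)\,d\mbt,
\]
so that $\prod_i T_\lambda^\gamma f_i$ is (a rescaling of) the inverse Fourier transform of the pushforward of $\prod_i f_i(t_i)\,d\mbt$ under the map $\mbt\mapsto\lambda\Gamma_\gamma(\mbt)$. Since the $f_i$ are supported on the separated intervals $\mathcal I_1,\dots,\mathcal I_d$ with $\dist(\mathcal I_i,\mathcal I_j)\ge L$, after reordering we may assume $t_1<t_2<\dots<t_d$ on the support, i.e. the support lies in the region $E$ of Lemma \ref{1-1}. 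Hence for $\epsilon$ small the map $\mbt\mapsto\Gamma_\gamma(\mbt)$ is injective on the relevant set, and I can change variables $y=\Gamma_\gamma(\mbt)$.

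The next step is to control the Jacobian. A direct computation gives
\[
\det\frac{\partial\Gamma_\gamma}{\partial\mbt}(\mbt)=\det\big(\gamma'(t_1),\gamma'(t_2),\dots,\gamma'(t_d)\big),
\]
a generalized Vandermonde-type determinant in the $\gamma'(t_i)$. For $\gamma=\gao$ one has $\gamma_\circ'(t)=(1,t,t^2/2!,\dots,t^{d-1}/(d-1)!)$, so this determinant equals a nonzero constant times $\prod_{i<j}(t_j-t_i)$, which on the separated support is bounded below by $c\,L^{\binom d2}=c\,L^{(d^2-d)/2}$. Since $\gamma\in\clag$ is a $C^{d+1}$-small perturbation of $\gao$, a continuity/perturbation argument (uniform in $\gamma\in\clag$ for $\epsilon$ small) shows the same lower bound holds, with a constant independent of $\gamma$. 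Then, by Plancherel applied after the substitution $y=\lambda\Gamma_\gamma(\mbt)$,
\[
\Big\|\prod_{i=1}^d T_\lambda^\gamma f_i\Big\|_{L^2}^2
=(2\pi)^d\int \Big|\,\prod_{i=1}^d f_i(t_i(y))\Big|^2\,\Big|\det\Big(\lambda\tfrac{\partial\Gamma_\gamma}{\partial\mbt}\Big)\Big|^{-1}dy
\lesssim \lambda^{-d}L^{-\frac{d^2-d}{2}}\prod_{i=1}^d\|f_i\|_{L^2}^2,
\]
where in the last step I undo the change of variables back to $\mbt$ and use the Jacobian lower bound; taking square roots yields the claimed inequality.

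The main obstacle is making the Jacobian lower bound genuinely uniform over all $\gamma\in\clag$ and over all admissible configurations of separated intervals — that is, the quantitative statement that
\[
\big|\det(\gamma'(t_1),\dots,\gamma'(t_d))\big|\gtrsim L^{(d^2-d)/2}
\quad\text{whenever } t_i\in\mathcal I_i,\ \min_{i\ne j}\dist(\mathcal I_i,\mathcal I_j)\ge L,
\]
with an implied constant depending only on $d$ (once $\epsilon$ is fixed small). This is the same total-positivity phenomenon underlying Lemma \ref{1-1}: one factors the determinant as (the $\gao$ value) times (a factor that is $1+O(\epsilon)$ uniformly), using that the map $(t_1,\dots,t_d)\mapsto\det(\gamma'(t_1),\dots,\gamma'(t_d))/\prod_{i<j}(t_j-t_i)$ extends continuously and stays close to its $\gao$-value when $\|\gamma-\gao\|_{C^{d+1}}$ is small. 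I would record this as the technical heart of the lemma and otherwise regard the change of variables and Plancherel as routine. (As with Lemma \ref{1-1}, the perturbative total-positivity input can be deferred to the appendix.)
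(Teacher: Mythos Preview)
Your proposal is correct and follows essentially the same route as the paper's proof: write the product as an inverse Fourier transform of the pushforward under $\Gamma_\gamma$, invoke Lemma~\ref{1-1} for injectivity, apply Plancherel after the change of variables $y=\Gamma_\gamma(\mbt)$, and use the perturbative lower bound $|\det(\gamma'(t_1),\dots,\gamma'(t_d))|\gtrsim \prod_{i<j}|t_j-t_i|$ obtained by comparison with $\gamma_\circ$. The paper carries out the Jacobian lower bound via a generalized mean value theorem to get the explicit factor $(1-\epsilon\,2^{d-1}d!)$, which is exactly the quantitative version of your continuity argument for the quotient $\det(\gamma'(t_1),\dots,\gamma'(t_d))/\prod_{i<j}(t_j-t_i)$; note also that your displayed Plancherel identity should have $|\det(\lambda\,\partial\Gamma_\gamma/\partial\mbt)|^{-2}$ in the $y$-integral before undoing the change of variables (as you indicate in words), but this does not affect the outcome.
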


\begin{proof}
Let $f_1,\dots, f_d$ be supported in each interval  $\mathcal I_i$,
$i=1,2,\dots, d$, and set $F(\mbt)=\prod_{i=1}^d
f_i(t_i)\chi_{\mathcal I_i}(t_i)$. Then, by Lemma \ref{1-1}
$\Gamma_\gamma:\prod_{i=1}^d \mathcal I_i\to \mathbb R^d$ is one to
one. Hence by the change of variables $y=\Gamma_\gamma(\mbt)$, we
have
\[
\prod_{i=1}^d T_\lambda^\gamma f_i = \int_{I^d} e^{i \lambda x\cdot \Gamma_\gamma(\mbt) } F(\mbt) d\mbt
  = \widehat{G} (\lambda x),
\]
where $ G( y ) = F( \mbt (y) ) | \det \big(
\frac{\partial\Gamma_\gamma}{\partial \mathbf t} \big) (\mbt (y))
|^{-1}$. By Plancherel's theorem and reversing the change of
variables $y \mapsto \mbt$, we see
\begin{align*}
\| \prod_{i=1}^d T_\lambda^\gamma f_i \|_{L^2(\Real^d)} &= \lambda^{-
\frac{d}{2} } \| G \|_{L^2 (\Real^d) }
 = \lambda^{- \frac{d}{2} } \bigg(  \int |F(\mbt) |^2
 \Big|\det \Big( \frac{\partial\Gamma_\gamma}{\partial \mathbf t} \Big) \Big|^{-1} d\mbt   \bigg)^{\frac{1}{2}}.
  \end{align*}
 Since $|t_j-t_i|\ge  L$, $i\neq j$, it is sufficient to show that for all $\gamma \in \clag$
\[ \Big|\det \Big( \frac{\partial\Gamma_\gamma}{\partial \mathbf t} \Big)\Big|\ge
\frac1{2\prod_{i=1}^d (i-1)!} \prod_{1\le i<j\le d} |t_j-t_i| \]
 if $\epsilon$ is
sufficiently small. If $\gamma \in \clag$, then
$\gamma=\gamma_\circ+\mathcal E$ and $\|\mathcal E\|_{C^{d+1}(I)}\le
\epsilon$. Hence by a computation with a generalized mean value
theorem we see that
\[
\Big|\det \Big( \frac{\partial\Gamma_\gamma}{\partial \mathbf t}
\Big)\Big|\ge \frac1{\prod_{i=1}^d (i-1)!}\prod_{1\le i<j\le d}
|t_j-t_i| \,\times ( 1 - \epsilon 2^{d-1} d!).
\] Taking a small $\epsilon$
so that $\epsilon< ( 2^{d} d!)^{-1}$, we get the desired estimate.
This completes proof.
\end{proof}

Using Lemma \ref{l2}, we obtain the following $L^p$--$L^q$ estimate
via  interpolation with  $L^1$--$L^\infty$ estimate.

\begin{proposition}\label{l2fractal}
 Let $\mathcal I_1,\dots, \mathcal I_d$, and $\gamma \in \clag$ be given as in Lemma \ref{l2}.
Suppose  $\mu$ satisfies \eqref{bmeasure}. If $\epsilon>0$ is
sufficiently small, then for $1/p+1/q\le 1$ and $q\ge 2$ there is a
constant $C$, independent of $\gamma$, such that
\[
 \| \prod_{i=1}^dT_\lambda^\gamma f_i \|_{L^q(d\mu)}\le C C_\mu^\frac1{q}
 L^{-\frac{d^2-d}{2q}}\lambda^{-\frac\alpha{q}}\prod_{i=1}^d\| f_i \|_p
 \]
 whenever $f_i$ is supported in $\mathcal I_i$, $i=1,2,\dots, d$.
\end{proposition}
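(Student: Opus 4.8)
The plan is to transfer the $L^2(\mathbb R^d)$ bound of Lemma~\ref{l2} to $L^2(d\mu)$ using the band-limitedness of the product $\prod_{i=1}^d T_\lambda^\gamma f_i$, pair it with the trivial $L^1$--$L^\infty$ bound
\[
\Big\| \prod_{i=1}^d T_\lambda^\gamma f_i \Big\|_{L^\infty(\mathbb R^d)} \le \prod_{i=1}^d \|f_i\|_{L^1(I)},
\]
which is immediate from $|T_\lambda^\gamma f_i(x)| \le \|f_i\|_{L^1(I)}$, and then interpolate.

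For the transfer, I would first note that for $\gamma\in\clag$ one has $\|\gamma\|_{C^0(I)} \le \|\gao\|_{C^0(I)} + \epsilon =: R_0$, so $\Gamma_\gamma(I^d)\subset B(0,dR_0)$; hence, writing $g = \prod_{i=1}^d T_\lambda^\gamma f_i = \widehat G(\lambda\,\cdot\,)$ as in the proof of Lemma~\ref{l2}, the function $g$ has Fourier transform supported in $B(0,c\lambda)$ with $c = dR_0$ independent of $\gamma$. Then I would prove the general comparison: if $\widehat g$ is supported in $B(0,c\lambda)$ and $\mu$ satisfies \eqref{bmeasure}, then for every $1\le q\le\infty$,
\[
\|g\|_{L^q(d\mu)} \le C\,(C_\mu\lambda^{d-\alpha})^{1/q}\,\|g\|_{L^q(\mathbb R^d)}.
\]
This is the standard ``locally constant'' argument: choosing $\phi$ Schwartz with $\widehat\phi\equiv1$ on $B(0,c)$ and $\phi_\lambda = \lambda^d\phi(\lambda\,\cdot\,)$, one has $g = g*\phi_\lambda$, and Hölder's inequality together with the Schwartz decay of $\phi$ yields $\sup_{Q}|g|^q \le C\inf_{Q}(|g|^q*\Phi_\lambda)$ for any cube $Q$ of side $\lambda^{-1}$, where $\Phi$ is a fixed integrable Schwartz majorant. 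Covering $\mathbb R^d$ by a lattice of such cubes $Q_k$, \eqref{bmeasure} gives $\mu(Q_k)\le CC_\mu\lambda^{-\alpha}$, whence
\[
\int |g|^q\,d\mu = \sum_k\int_{Q_k}|g|^q\,d\mu \le \sum_k\mu(Q_k)\sup_{Q_k}|g|^q \le CC_\mu\lambda^{d-\alpha}\sum_k\int_{Q_k}(|g|^q*\Phi_\lambda) = CC_\mu\lambda^{d-\alpha}\|g\|_{L^q(\mathbb R^d)}^q .
\]

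Applying this with $q=2$ to Lemma~\ref{l2} and using $(\lambda^{d-\alpha})^{1/2}\lambda^{-d/2}=\lambda^{-\alpha/2}$ gives
\[
\Big\|\prod_{i=1}^d T_\lambda^\gamma f_i\Big\|_{L^2(d\mu)} \le C\,C_\mu^{1/2}\,L^{-(d^2-d)/4}\,\lambda^{-\alpha/2}\prod_{i=1}^d\|f_i\|_{L^2(I)} .
\]
Then I would apply multilinear (Riesz--Thorin) interpolation to the $d$-linear map $(f_1,\dots,f_d)\mapsto\prod_i T_\lambda^\gamma f_i$ between this $(L^2,\dots,L^2)\to L^2(d\mu)$ estimate and the $(L^1,\dots,L^1)\to L^\infty(d\mu)$ estimate above; with interpolation parameter $\theta = 1-2/q$ one lands precisely at $(L^p,\dots,L^p)\to L^q(d\mu)$ with $1/p+1/q=1$, $2\le q\le\infty$, and constant $C\,C_\mu^{1/q}L^{-(d^2-d)/(2q)}\lambda^{-\alpha/q}$. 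Finally, for the full range $1/p+1/q\le1$ one has $p\ge p_0$ with $1/p_0+1/q=1$, and since each $f_i$ is supported in $\mathcal I_i\subset I$ with $|\mathcal I_i|\le1$, Hölder's inequality gives $\|f_i\|_{L^{p_0}(\mathcal I_i)}\le\|f_i\|_{L^p(\mathcal I_i)}$, reducing to the endpoint already proved.

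The real work has already been done in Lemma~\ref{l2}, so there is no serious obstacle here. The one point demanding care is the fractal-to-Lebesgue comparison — making the ``$g$ is essentially constant on $\lambda^{-1}$-cubes'' heuristic rigorous with the Schwartz tails, and checking that all constants remain independent of $\gamma\in\clag$ (which rests on the uniform bound $\|\gamma\|_{C^0(I)}\le R_0$ and hence on the uniform control of the Fourier support) and of $\mu$.
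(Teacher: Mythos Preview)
Your proposal is correct and follows essentially the same approach as the paper: trivial $L^1$--$L^\infty$ bound, transfer of the $L^2(\mathbb R^d)$ estimate from Lemma~\ref{l2} to $L^2(d\mu)$ via the band-limitedness of $\prod_i T_\lambda^\gamma f_i$, then interpolation and H\"older. The only cosmetic difference is in how the transfer is implemented --- the paper writes $|g|^2\le C|g|^2\ast|\varphi|_\lambda$ and then applies Fubini together with the pointwise bound $|\varphi|_\lambda\ast\mu\le CC_\mu\lambda^{d-\alpha}$, whereas you use an equivalent lattice-cube version of the same locally-constant principle.
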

\begin{proof} To begin with, we observe that  the  trivial $L^1$--$L^\infty$ estimate
\begin{equation*}
\| \prod_{i=1}^dT_\lambda^\gamma f_i \|_{L^\infty (d\mu)}\le
 \prod_{i=1}^d\| f_i \|_1
\end{equation*}
holds. It is obvious  because $ \prod_{i=1}^dT_\lambda^\gamma f_i $
is continuous and $|\prod_{i=1}^dT_\lambda^\gamma f_i|\le
\prod_{i=1}^d\| f_i \|_1$. Since $f_i$ is supported in $\mathcal
I_i\subset I$, by H\"older's inequality and interpolation, it suffices to show that
\begin{equation}\label{l22}
\| \prod_{i=1}^d \tla f_i \|_{L^2(d\mu)}\le C C_\mu^\frac12
L^{-\frac{d^2-d}{4}}\lambda^{-\frac\alpha 2}  \prod_{i=1}^d\| f_i \|_2.
\end{equation}

Note that the Fourier transform $\mathcal  F({\prod_{i=1}^d
T_\lambda^\gamma f_i})$  of $\prod_{i=1}^d T_\lambda^\gamma f_i$ is
supported in a ball of radius $ C\sqrt{2d}\,\lambda$ for some $C>0$.
Let $\varphi$ be a smooth function such that $\widehat\varphi=0$ if
$|\xi|\ge 2C\sqrt{2d}$  and $\widehat\varphi=1$ if $|\xi|\le
C\sqrt{2d}$.
Then, $\mathcal  F({\prod_{i=1}^d T_\lambda^\gamma
f_i})(\xi)=\mathcal F({\prod_{i=1}^d T_\lambda^\gamma
f_i})(\xi){\widehat{\varphi}}(\xi/\lambda)$. Hence $\prod_{i=1}^d
T_\lambda^\gamma f_i=\varphi_\lambda \ast (\prod_{i=1}^d
T_\lambda^\gamma f_i)$ where
$\varphi_\lambda(x)=\lambda^d\varphi(\lambda x)$. This and
H\"older's inequality gives
\[|\prod_{i=1}^d T_\lambda^\gamma f_i|^2 \le C|\prod_{i=1}^d
T_\lambda^\gamma f_i|^2\ast |\varphi|_\lambda\] with $C$ only
depending on $\varphi$. By the rapid decay of $\varphi$ and
\eqref{bmeasure}, it follows that
\[
|\varphi_\lambda | \ast \mu  = \int \lambda^d |\varphi | (
\lambda(x-y) ) d\mu(y)  \leq C C_\mu \lambda^{d-\alpha}.
\]
Therefore, this and Fubini's theorem give
\begin{align*}
\|\prod_{i=1}^dT_\lambda^\gamma f_i\|_{L^2(d\mu)} &\le \Big(\int
|\prod_{i=1}^d T_\lambda^\gamma f_i|^2 \ast |\varphi|_\lambda
 d\mu(x)\Big)^\frac12
\le \|\prod_{i=1}^d T_\lambda f_i\|_2\,\||\varphi|_\lambda\ast
\mu\|_\infty^\frac12
\\
&\lesssim C_\mu^\frac12 L^{-\frac{d^2-d}{4}} \lambda^{-\frac d2}\lambda^{\frac d2-\frac
\alpha 2}  \prod_{i=1}^d \|f_i\|_2
\lesssim C_\mu^\frac12 L^{-\frac{d^2-d}{4}} \lambda^{-\frac\alpha 2}\prod_{i=1}^d \|f_i\|_2.
\end{align*}
For the third inequality we use Lemma \ref{l2}.
Hence we get \eqref{l22}.
\end{proof}

\subsection*{\it Induction quantity}
For $1\le \lambda,$ $1\le p, q\le \infty$, and $\epsilon>0$, we
define $Q_\lambda(R) =Q_\lambda(R, p, q,\epsilon)$ by setting
\begin{align}\label{aqlambda}
Q_\lambda(R) = \sup\{\,\, \|\tla f\|_{L^q(d\mu,B_R)}: \mu\in
\mathfrak B(\alpha,1),\, \gamma\in \clag, \, {\|f\|_{L^p(I)}\le 1}
 \}
\end{align}
where $B_R$ is the open ball of radius $R$ centred at the origin.
Clearly, $Q_\lambda(R)<\infty$ because $Q_\lambda(R) \le
R^{\alpha/q}$ for any $\lambda>0$.

\begin{lemma}\label{induction} Let $\gamma\in  \clag$, $\mu\in \mathfrak
B(\alpha,1)$, and let $\lambda\ge  1$, $0<|h|<1$. Suppose that $f$
is supported in the interval $[\tau,\tau+h]^*\subset[0,1]$. Then, if
$\epsilon>0$ is sufficiently small,  there is a constant $\delta>0$,
independent of $\gamma$, such that if $0<|h|\le \delta$
\begin{equation}\label{qlambda}
 \|\tla f\|_{L^q(d\mu,B_R)}\le C \,|h|^{1-\frac1p-\frac{\beta(\alpha)}{q}} Q_{\lambda }(R)
 \|f\|_p.
\end{equation}
\end{lemma}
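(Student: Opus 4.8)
The plan is to rescale the curve $\gamma$ and the measure $\mu$ so that the problem fits the definition \eqref{aqlambda} of $Q_\lambda(R)$. We may assume $h>0$; for $h<0$ the argument is identical with $|h|$ in place of $h$. Since $\supp f\subset[\tau,\tau+h]$, the substitution $t=hs+\tau$ with $g(s):=f(hs+\tau)$ gives $\|g\|_{L^p(I)}=h^{-1/p}\|f\|_{L^p(I)}$, and, by \eqref{normalcurve},
\[
\tla f(x)=h\,e^{\,i\lambda x\cdot\gamma(\tau)}\,T_\lambda^{\gth}g\bigl(D_h(M_\tau^\gamma)^t x\bigr).
\]
The modulation factor $e^{\,i\lambda x\cdot\gamma(\tau)}$ has modulus $1$ and is irrelevant for $L^q(d\mu)$ norms.

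Fix $\epsilon>0$ small enough that every $\gamma\in\clag$ satisfies \eqref{torsion}. For such $\gamma$ the quantities $\|\gamma\|_{C^{d+1}(I)}$ and $\sup_{\tau\in I}\|(M_\tau^\gamma)^{-1}\|$ are bounded by constants $B_1,B$ depending only on $d$ and $\epsilon$, since $\gamma$ is an $\epsilon$-perturbation of $\gao$ in $C^{d+1}$ (as in the proof of Lemma \ref{normalization}). Hence, by Lemma \ref{normalization} and Remark \ref{interval}, there is $\delta>0$ depending only on $d,\epsilon$ such that $\gth\in\clag$ for all $0<h\le\delta$; shrinking $\delta$ if needed, we may also assume $\|D_h(M_\tau^\gamma)^t\|\le 1$ for $0<h\le\delta$ (using $\|D_h\|=h$ and the uniform bound on $\|M_\tau^\gamma\|$), so that $D_h(M_\tau^\gamma)^t(B_R)\subset B_R$.

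Now change variables $y=D_h(M_\tau^\gamma)^t x$ and let $\sigma:=\mu_{A,h}^{\mba}$ be the push-forward measure from \eqref{normalmeasure} with $\mba=(1,2,\dots,d)$ and $A=(M_\tau^\gamma)^t$; here we use that \eqref{normalmeasure} extends to the bounded, boundedly-supported Borel integrand $|T_\lambda^{\gth}g|^q\chi_{D_h(M_\tau^\gamma)^t(B_R)}$ because $\sigma$ is Radon. By Lemma \ref{rescale}, and since $\sum_{i=1}^d i=\tfrac{d^2+d}{2}$ makes the exponent of $h$ there equal to exactly $-\beta(\alpha)$,
\[
\sigma(B(x,\rho))\le C\,\|(M_\tau^\gamma)^{-1}\|^{\alpha}\,h^{-\beta(\alpha)}\rho^{\alpha}\le C\,B^{\alpha}\,h^{-\beta(\alpha)}\rho^{\alpha}=:M'\rho^{\alpha},
\]
so $M'^{-1}\sigma\in\mathfrak B(\alpha,1)$. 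Applying the definition of $Q_\lambda(R)$ to the curve $\gth\in\clag$ and the measure $M'^{-1}\sigma$, together with $D_h(M_\tau^\gamma)^t(B_R)\subset B_R$,
\[
\|\tla f\|_{L^q(d\mu,B_R)}=h\,\|T_\lambda^{\gth}g\|_{L^q(d\sigma,\,D_h(M_\tau^\gamma)^t(B_R))}\le h\,(M')^{1/q}\,Q_\lambda(R)\,\|g\|_{L^p(I)}.
\]
Inserting $M'\le CB^{\alpha}h^{-\beta(\alpha)}$ and $\|g\|_{L^p(I)}=h^{-1/p}\|f\|_{L^p(I)}$ yields \eqref{qlambda}, with $C$ depending only on $d,\epsilon$ (note $B^{\alpha/q}\le\max(1,B^d)$ since $0<\alpha\le d$).

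The argument is short; the only step demanding care is the bookkeeping of the power of $h$: one must check that the factor $h$ from $dt=h\,ds$, the factor $h^{-1/p}$ from $\|g\|_{L^p(I)}$, and the factor $(M')^{1/q}\sim h^{-\beta(\alpha)/q}$ combine to exactly $h^{\,1-1/p-\beta(\alpha)/q}$. This is precisely where the homogeneity exponent $\beta(\alpha)$ of Lemma \ref{rescale} enters, and it relies on the choice $\mba=(1,\dots,d)$, for which $\sum a_i$ cancels the term $\tfrac{d^2+d}{2}$. The remaining points — the uniform $C^{d+1}$- and inverse-norm bounds over $\clag$, and the validity of the change of variables — are routine.
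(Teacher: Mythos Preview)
Your proof is correct and follows essentially the same approach as the paper's: rescale via \eqref{normalcurve} so that the curve $\gth$ lands back in $\clag$, push forward $\mu$ through $D_h(M_\tau^\gamma)^t$ and use Lemma~\ref{rescale} (with $\mba=(1,\dots,d)$, so the $h$-exponent collapses to $-\beta(\alpha)$) to normalize the new measure into $\mathfrak B(\alpha,1)$, then invoke the definition of $Q_\lambda(R)$. The only differences from the paper are cosmetic: you carry the $h$-factors via $g(s)=f(hs+\tau)$ and $\|g\|_p=h^{-1/p}\|f\|_p$, whereas the paper absorbs one factor of $|h|$ into $f_h=|h|f(ht+\tau)$; and you make explicit the extension of \eqref{normalmeasure} from $C_c$ to bounded Borel integrands, which the paper uses tacitly.
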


\begin{proof} We begin with setting $f_h=|h| f(ht+\tau)$. By translation, scaling and using
\eqref{normalcurve} it follows that
\begin{align}\label{scaling}
\begin{aligned}
 | \tla f(x) |
 =  \bigg| \int_I e^{i\lambda x \cdot ( \gamma (ht+\tau) - \gamma(\tau) )}  f_h(t) dt \bigg|
 = \bigg| \int_I e^{i\lambda D_h (M_\tau^\gamma)^t x \cdot \gth(t)} f_h(t) dt
 \bigg|.
 \end{aligned}
\end{align}
We denote by $\mu_\tau^h$ the measure given by \begin{equation}
\label{measureth}
 \int F(x) d\mu_\tau^h(x)= \int F(D_h
(M_\tau^\gamma)^t  x) d\mu(x)
\end{equation}
and set
\[d\widetilde \mu(x)=\frac{|h|^{\beta(\alpha)}}{1+ C
\|(M_\tau^\gamma)^{-t}\|^\alpha}\,d\mu_\tau^h(x).\] Hence, we have
\begin{align*}
 &\int_{B_R} | \tla f(x) |^q d\mu(x)
= \int |  T_{\lambda}^{\gth} f_h (x) |^q
\chi_{B_R}((M_\tau^\gamma)^{-t}D_h^{-1} x)
d\mu_\tau^h(x)\\
= &(1+ C \|(M_\tau^\gamma)^{-t}\|^\alpha)|h|^{-\beta(\alpha)} \int |
T_{\lambda}^{\gth} f_h (x) |^q
\chi_{B_R}((M_\tau^\gamma)^{-t}D_h^{-1} x) d\widetilde\mu(x)\chg{.}
\end{align*}

Now we note that $\|(\mga)^{-t}\|\le C$ uniformly for $\gamma\in
\clag$ if $\epsilon>0$ is small enough.  By Lemma \ref{rescale}\,
$\widetilde \mu\in \mathfrak B(\alpha,1)$, and $\gth\in \mathfrak
G(C|h|\epsilon)\subset \clag$ if $0<|h|\le \delta$\, for a
sufficiently small $\delta>0$. Moreover, the set $\{ \mga D_h x :
x\in B_R\}$ is also contained in $B_R$ for all $\gamma\in \clag$ if
$0<|h|\le \delta$ and $\delta$ is small enough. Therefore, by the
definition of $Q_{\lambda }(R)$ we see
\begin{align*}
\int_{B_R} |\tla f(x)|^q d\mu(x) &\le C|h|^{-\beta(\alpha)}
\int_{B_R} | T_{\lambda}^{\gth} f_h (x) |^q d\widetilde\mu(x)
\\
&\le C|h|^{-\beta(\alpha)}(Q_{\lambda}(R) \,\| f_h\|_p) ^q \\
& = C |h|^{-\beta(\alpha)+q-\frac qp} (Q_{\lambda }(R) \| f\|_p)^q.
\end{align*}
This gives the desired inequality \eqref{qlambda}.
\end{proof}

\subsection*{\it Multilinear decomposition} Now we make a
decomposition of $T_\lambda^\gamma$ which is needed to exploit the
$d$-linear estimate with separated supports. This decomposition
doesn't depend on particular choices of $\gamma$.

 Let $A_1,\dots, A_{d-1}$ be dyadic numbers such that
\[1=A_0\gg  A_1\gg A_2 \dots\gg  A_{d-1}.\]
 For $i=1, \dots, d-1$, let us denote by $ \{ \cuxi^i\}$ the
collection of closed dyadic intervals of length $A_i$ which are
contained in $[0,1]$. And we set
\[ f_{\cuxi^i}=\chi_{\cuxi^i} f\]
so that for each $i=1, \dots, d-1,$ $ f=\sum_{\cuxi^i} f_{\cuxi^i} $
almost everywhere whenever $f$ is supported in $I$. Hence, it
follows that \begin{equation}\label{iscale}\tla f=\sum_{\cuxi^i}
\tla f_{\cuxi^i}, \,\, i=1, \dots, d-1.\end{equation}

 Let $S_1,\dots, S_i$ be subsets of $I$ and let us
define
\[\Delta(S_1, S_2, \dots, S_i)=\min_{j\neq k}\, \dist(S_j, S_k).\]

\begin{lemma}\label{multidecomp} Let $\gamma:I\to \mathbb R^d$ be a smooth
curve. Let $A_0,A_1,\dots, A_{d-1}$, and $\{\cuxi^i\}$, $i=1, \dots,
d-1$ be defined as in the above. Then, for any $x\in \mathbb R^d$,
there is a constant $C$, independent of $\gamma, x$, $A_0,A_1,\dots,
A_{d-1}$, such that
 \begin{equation}
 \begin{aligned}\label{1d-decomp}
|\tla & f(x)|\le \sum_{i=1}^{d-1} CA_{i-1}^{-2(i-1)} \max_{\cu i}
|\tla \ef{i}(x)|\\
&+ CA_{d-1}^{-2(d-1)}\max_{\substack{\cuu{d-1}1,\cuu{d-1}2,
\dots,\cuu{d-1}d;\\ \Delta(\cuu{d-1}1, \cuu{d-1}2,
\dots,\cuu{d-1}d)\ge A_{d-1}}} |\prod_{i=1}^d \tla \eff{d-1}i
(x)|^\frac1d.
\end{aligned}
\end{equation}
Here $\cuu{i}j$ denotes the element in $\{\cuxi^i\}$.
\end{lemma}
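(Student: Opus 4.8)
The plan is to perform a pigeonhole argument over the scales $A_1,\dots,A_{d-1}$, successively extracting either one dominant interval (if the bulk of $T_\lambda^\gamma f$ is concentrated on a single dyadic interval at some scale) or a $d$-tuple of separated intervals at the finest scale. First I fix $x$ and write $g_i = \max_{\cu i} |\tla f_{\cuxi^i}(x)|$ for $i=1,\dots,d-1$; at each scale $A_i$ there are $O(A_i^{-1})$ intervals. The key dichotomy at scale $A_i$ is: either there exist $d$ intervals $\cuu{i}1,\dots,\cuu{i}d \in \{\cuxi^i\}$ with $\Delta(\cuu{i}1,\dots,\cuu{i}d)\ge A_i$ on which all of the $|\tla f_{\cuu{i}\cdot}(x)|$ are comparable to a positive fraction of $\max_{\cu i}|\tla f_{\cuxi^i}(x)|$, or else all but at most $d-1$ of the intervals carry a negligible contribution, in which case the sum $\tla f(x) = \sum_{\cuxi^i} \tla f_{\cuxi^i}(x)$ is controlled by $O(1)$ of the dominant intervals clustered inside a single interval of the next coarser scale $A_{i-1}$. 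The combinatorial point is that if no $d$ well-separated intervals can be found among those carrying a comparable share, then all such intervals lie within a window of length $O(d\cdot A_i) = O(A_{i-1})$ after fixing the scales so that $A_{i-1}/A_i$ is large; hence they are all contained in $O(1)$ intervals of $\{\cuxi^{i-1}\}$, and $|\tla f(x)| \lesssim A_{i-1}^{-1}\max_{\cu{i-1}}|\tla f_{\cuxi^{i-1}}(x)|$ plus a small error absorbed into the main term.

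Concretely, I would run this as a finite induction on $i$ from $i=1$ upward. Start from \eqref{iscale} with $i=1$: $|\tla f(x)| \le \sum_{\cuxi^1}|\tla f_{\cuxi^1}(x)| \le A_1^{-1}\max_{\cu 1}|\tla f_{\cuxi^1}(x)|$, which already matches the first term (up to the harmless exponent $-2(i-1)=0$ when $i=1$). At the induction step, suppose $|\tla f(x)|$ is bounded by (a sum of the claimed coarser-scale terms plus) $C A_{i-1}^{-2(i-1)}|\tla f_{\cuxi^{i-1}_\star}(x)|$ for some fixed $\cuxi^{i-1}_\star$. Refine this single interval into its $A_{i-1}/A_i = O(A_i^{-1})$ dyadic subintervals of length $A_i$ and apply the dichotomy above at scale $A_i$: either I extract $d$ subintervals with $\Delta \ge A_i$ on which the product $\prod_{j=1}^d |\tla f_{\cuu ij}(x)|^{1/d}$ dominates (giving, after carrying down the accumulated factor, a bound of the form $C A_{i-1}^{-2(i-1)} A_i^{-O(1)}$ times the separated-product term, which I then re-group so the exponents line up to $-2i$), or the mass concentrates on $O(1)$ subintervals inside a single interval $\cuxi^i_\star$ of scale $A_i$, yielding $|\tla f_{\cuxi^{i-1}_\star}(x)| \lesssim A_i^{-1}|\tla f_{\cuxi^i_\star}(x)|$, hence $|\tla f(x)| \lesssim A_{i-1}^{-2(i-1)} A_i^{-1}|\tla f_{\cuxi^i_\star}(x)| \le A_i^{-2i-? }\cdots$; here I would be a little careful and simply take the exponent $-2i$ generously, since only an upper bound is needed. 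Iterating until $i=d-1$ exhausts all the intermediate terms $\max_{\cu i}|\tla f_{\cuxi^i}(x)|$, and at the last scale the only surviving alternative to a dominant single interval is the existence of $d$ separated intervals of length $A_{d-1}$, producing the final term with the factor $A_{d-1}^{-2(d-1)}$.

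The main obstacle is purely bookkeeping: verifying that the powers of $A_i$ collected along the way never exceed $A_{i-1}^{-2(i-1)}$ at the intermediate steps and $A_{d-1}^{-2(d-1)}$ at the end, i.e. that the crude exponent $-2(i-1)$ is actually large enough to absorb each refinement loss. This works because passing from scale $A_{i-1}$ to $A_i$ costs at most a factor $O((A_{i-1}/A_i)^{2})$ — one power of $A_i^{-1}$ from the number of subintervals and one more from the "comparable share" pigeonholing needed to extract either a dominant interval or a separated $d$-tuple — and these telescope into the stated exponents; the constant $C$ stays independent of $\gamma$, $x$, and the $A_i$ because every estimate used (triangle inequality, counting dyadic subintervals, choosing a comparable fraction) is independent of the curve. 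One should also note that the error terms discarded when "the mass concentrates on $O(1)$ subintervals" are bounded by a constant multiple of the retained dominant term, so they are absorbed rather than creating new terms. No use is made here of the torsion hypothesis or of Lemma \ref{l2}; this lemma is a soft combinatorial decomposition, and all the analytic content enters later when \eqref{1d-decomp} is combined with Proposition \ref{l2fractal} and Lemma \ref{induction}.
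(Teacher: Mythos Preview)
Your dichotomy is where the argument breaks. Working inside a fixed interval $\cuxi^{i-1}_\star$ and looking at its $A_i$-subintervals, you claim that if no $d$ of the ``big'' subintervals are pairwise $A_i$-separated then the mass concentrates on a \emph{single} interval $\cuxi^i_\star$ of scale $A_i$. That is false: a greedy selection only shows the big subintervals lie in at most $d-1$ clusters, each of diameter $O(A_i)$, and these clusters may be spread across all of $\cuxi^{i-1}_\star$. For instance, if exactly two subintervals carry comparable mass and sit at opposite ends of $\cuxi^{i-1}_\star$, there is (for $d\ge 3$) no $A_i$-separated $d$-tuple and also no single dominant $A_i$-interval. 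So the induction hypothesis ``one dominant interval at the current scale'' does not propagate. A second, related problem: when you \emph{do} extract $d$ separated subintervals at an intermediate scale $A_i$ with $i<d-1$, you obtain a $d$-fold product at scale $A_i$, whereas \eqref{1d-decomp} has the product only at the finest scale $A_{d-1}$; you give no mechanism for passing from the former to the latter. (Also, your base case $|\tla f(x)|\le A_1^{-1}\max_{\cu1}|\tla \ef1(x)|$ does \emph{not} match the $i=1$ term of \eqref{1d-decomp}, whose coefficient is $CA_0^{0}=C$.)

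The paper avoids these issues by a different induction: instead of trying to extract $d$ separated intervals at once, it increases the multiplicity by one at each scale. The base step \eqref{step1} gives either $\max_{\cu1}|\tla\ef1|$ or a \emph{bilinear} term with two $A_1$-separated intervals. The inductive step \eqref{induct} takes a $j$-fold separated product at scale $A_{j-1}$, refines each of the $j$ intervals into $A_j$-subintervals, and shows that every $j$-tuple of subintervals is either (I) dominated by the maxima $\cuu{j}{\alpha_{i,\ast}}$ (yielding $\max_{\cu j}|\tla\ef j|$) or (II) contains some $\cuu{j}{\alpha_k}$ that is $A_j$-separated from its own $\cuu{j}{\alpha_{k,\ast}}$ while still carrying mass $\ge A_j^j\max$; then the $j+1$ intervals $\cuu{j}{\alpha_{1,\ast}},\dots,\cuu{j}{\alpha_{j,\ast}},\cuu{j}{\alpha_k}$ are pairwise $A_j$-separated. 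Iterating $j=2,\dots,d$ lands the $d$-fold product precisely at scale $A_{d-1}$, and the terms $\max_{\cu i}|\tla\ef i|$ for every $i=1,\dots,d-1$ arise along the way --- this is why \eqref{1d-decomp} has $d-1$ linear terms, not just one.
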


The exact exponents of $A_i$ are not important for the argument
below. So, we don't try to obtain the  best exponents.

\begin{proof}
Fix $x\in \mathbb R^d$. By a simple argument it is easy to see that
\begin{equation} \label{step1} |\tla f(x)|\le C\max_{\cu1} |\tla \ef{1}(x)|+
CA_1^{-1} \max_{\substack{\cuu11, \cuu12;\\ \Delta(\cuu11,
\cuu12)\ge {A_1}}} |\tla \eff11 (x)\tla\eff12(x)|^\frac12
.\end{equation} Indeed, let $\cu1_\ast\in \{\cuxi^1\}$ be the
interval such that $|\tla f_{\cu1_{\ast}}(x)|= \max_{\cu1}
|\tla\ef{1}(x)|$. We now consider the cases
$ |\tla f(x)|\le 100|\tla f_{\cu1_{\ast}}(x)|,$ $ |\tla f(x)|\ge
100|\tla f_{\cu1_{\ast}}(x)|,$ separately. For the latter case, from
\eqref{iscale} it is easy to see that  there is an interval
$\cu1_1\in \{\cuxi^1\}$ such that $|\tla f(x)|\le C
A_1^{-1} |\tla {f_{\mathfrak I^1_1}}(x)|$ 
and $\Delta(\cu1_\ast, \cu1_1)\ge A_1$.
 Then it follows that
\[|\tla f(x)| \le C A_1^{-1} |\tla {f_{\mathfrak I^1_1}}(x) 
\tla f_{\cu1_{\ast}}(x)|^\frac12.\] Combining  two cases we get the
desired inequality \eqref{step1}, which is clearly independent of
$x$ and $\gamma$.

Now, for $2\le j\le d$ we claim that
\begin{equation}\label{induct}\begin{aligned}
&\max_{\substack{\cuu{j-1}1, \cuu{j-1}2, \dots,\cuu{j-1}j;\\
\Delta(\cuu{j-1}1, \cuu{j-1}2, \dots,\cuu{j-1}j)\ge {A_{j-1}}}}
|\prod_{i=1}^j  \tla \eff{j-1}i (x)|^\frac1j \\
\le& C\max_{\cu j} |T_\lambda \ef{j}(x)| +
CA_{j}^{-2}\max_{\substack{\cuu{j}1, \cuu{j}2, \dots,\cuu{j}{j+1};\\
\Delta(\cuu{j}1, \cuu{j}2, \dots,\cuu{j}{j+1})\ge {A_{j}}}}
|\prod_{i=1}^{j+1} \tla \eff{j}i (x)|^\frac1{j+1}
\end{aligned}\end{equation}
holds with $C$, independent of $x,$ $\gamma$, and $A_{j-1}, A_{j}$.
This proves the desired inequality. In fact, starting from
\eqref{step1} and applying \eqref{induct} successively to the product
terms we see that
\begin{align*}
|\tla & f(x)|\le C\sum_{i=1}^{d-1} \Big[\prod_{k=1}^{i} A_{k-1}^{-2}\Big]\max_{\cu i} |\tla \ef{i}(x)|\\
&+ C\Big[\prod_{k=1}^{d-1}
A_{k-1}^{-2}\Big]\max_{\substack{\cuu{d-1}1, \cuu{d-1}2,
\dots,\cuu{d-1}d;\\\Delta(\cuu{d-1}1, \cuu{d-1}2,
\dots,\cuu{d-1}d)\ge A_{d-1}}} |\prod_{i=1}^d \tla \eff{d-1}i
(x)|^\frac1d.
\end{align*}
This clearly implies \eqref{1d-decomp}. Hence it remains  to show
\eqref{induct}.

Suppose that intervals $\cuu{j-1}1, \cuu{j-1}2, \dots,\cuu{j-1}j\in
\{\cuxi^{j-1}\}$ with $\Delta(\cuu{j-1}1,$ $ \cuu{j-1}2,$ $
\dots,\cuu{j-1}j)$ $\ge {A_{j-1}}$ are given. For each $i
=1,\dots,j$, we denote by $\{\cuu{j}{ \alpha_i}\}$ the collection of
the dyadic intervals which satisfy   $\cuu{j}{ \alpha_i}\subset \cuu
{j-1}  i$ and $\cuu{j}{ \alpha_i}\in\{\cuxi^j\}$.   Clearly, the
number of $\{\cuu{j}{\alpha_i}\}$ is $A_{j-1}/A_j$. Since
$\bigcup_{\cuu{j}{\alpha_i}} \cuu{j}{ \alpha_i}=\cuu{j-1}i$,  \[\tla
\eff{j-1}i= \sum_{\cuu{j}{\alpha_i}} \tla \eff{j}{\alpha_i}.\]
Let $\cuu{j}{ \alpha_{i,\ast}}\in \{\cuu{j}{ \alpha_i}\}$ denote the
dyadic interval
 such that
\[ |\tla \eff{j}{\alpha_{i,\ast}}(x)|=\max_{\cuu{j}{\alpha_{i}}}|\tla \eff{j}{\alpha_{i}} (x)|.\]
Given a $j$-tuple $(\cuu{j}{ \alpha_1}, \dots, \cuu{j}{
\alpha_{j}})$ of intervals, there are the following two cases:
\[
\tag{I} \label{case11}  \text{or }\,\, \begin{aligned}&|\tla
\eff{j}{\alpha_i} (x)|< A_j^{j} \max_{i=1,\dots,j}|\tla
\eff{j}{\alpha_{i,\ast}} (x)| \text{ for some } i,
\\
& 
\Delta(\cuu{j}{\alpha_i}, \cuu{j}{\alpha_{i,\ast}}) < A_{j}\text{
for all } i,
\end{aligned}
\]
and
\[
\tag{I\!I}\label{case2} \text{and }\, \begin{aligned}&|\tla
\eff{j}{\alpha_i} (x)|\ge A_j^{j} \max_{i=1,\dots,j}|\tla
\eff{j}{\alpha_{i,\ast}} (x)| \text{ for all } i ,
\\&
\Delta(\cuu{j}{\alpha_i}, \cuu{j}{\alpha_{i,\ast}}) \ge A_{j} \text{
for some } i.
\end{aligned}
\]

We now split
\begin{align*}
\prod_{i=1}^j  \tla \eff{j-1}i (x) =\sum_{\cuu{j}{\alpha_1},\dots,
\cuu{j}{\alpha_j}} \prod_{i=1}^j  \tla \eff{j}{\alpha_i} (x)
=\Big(\sum_{\eqref{case11}}+\sum_{\eqref{case2}}\Big)\prod_{i=1}^j
\tla \eff{j}{\alpha_i}(x).
\end{align*}
Since $\#\{\cuu{j}{\alpha_i}\}\le A_j^{-1}$, there are $O(A_j^{-j})$
$j$-tuples $(\cuu{j}{\alpha_1},\dots, \cuu{j}{\alpha_j})$ in the
summation of the case \eqref{case11}. Hence it is easy to see that
\begin{equation}\label{case1}\Big|\sum_{\eqref{case11}}\prod_{i=1}^j \tla \eff{j}{\alpha_i} (x)\Big|^{\frac 1 j} \le
C\max_{i=1,\dots,j}|\tla \eff{j}{\alpha_{i,\ast}}(x) |.
\end{equation}

We now consider a term $\prod_{i=1}^j  \tla \eff{j}{\alpha_i} (x)$
from the second case \eqref{case2}. Then  $\Delta(\cuu{j}{\alpha_k},
\cuu{j}{\alpha_{k,\ast}}) $ $ \ge A_{j}$ for some $1\le k\le j$.
Since $ |\tla \eff{j}{\alpha_k}  (x)|$ $\ge A_j^{j}
\max_{i=1,\dots,j}$ $|\tla
\eff{j}{\alpha_{i,\ast}}(x) |$, 
\begin{align*}
\prod_{i=1}^j |\tla \eff j {\alpha_i} (x)| &\le \prod_{i=1}^j |\tla
\eff j {\alpha_{i,\ast}} (x)|^{\frac{1}{j+1}}
\prod_{i=1}^j |\tla \eff j {\alpha_{i,\ast}} (x)|^{\frac{j}{j+1}}\\
&\le A_j^{-\frac{j^2}{j+1}} | \tla \eff j {\alpha_k}
(x)|^{\frac{j}{j+1}} \prod_{k=1}^j |\tla \eff j {\alpha_{i,\ast}}
(x)|^{\frac{j}{j+1}}.
\end{align*}
Recalling that  $\Delta(\cuu{j-1}1,$ $ \cuu{j-1}2,$ $
\dots,\cuu{j-1}j)\ge {A_{j-1}}$ and $\cuu j{\alpha_{i,\ast}}\subset
\cuu {j-1}i $ for $i=1, \dots, j$, we see that $
\Delta(\cuu{j}{\alpha_{1,\ast}}, \dots, \cuu{j}{\alpha_j,\ast},
\cuu{j}{\alpha_k})\ge A_j$ because $\Delta(\cuu{j}{\alpha_k},
\cuu{j}{\alpha_{k,\ast}})\ge A_{j}$ and $\cuu j{\alpha_{k}}, \cuu
j{\alpha_{k,\ast}}\subset \cuu {j-1}k$. Therefore,
\begin{align*}
\prod_{i=1}^j  |\tla \eff{j} {\alpha_i} (x)|^\frac1j  &\le
A_j^{-1}\max_{\substack{\cuu{j}1, \cuu{j}2, \dots,\cuu{j}{j+1};\\
\Delta(\cuu{j}1, \cuu{j}2, \dots,\cuu{j}{j+1})\ge {A_{j}}}}
|\prod_{i=1}^{j+1} \tla \eff{j}i (x)|^{\frac 1{j+1}}.
\end{align*}
Since there are $O({A_j^{-j}})$ $j$-tuples $(\cuu{j}{1},\dots,
\cuu{j}{j})$, it follows that
\[\Big|\sum_{\eqref{case2}}\prod_{i=1}^j  \tla
\eff{j}{\alpha_i} (x)\Big|^{\frac1j} \le CA_j^{-2}\max_{\substack{\cuu{j}1, \cuu{j}2, \dots,\cuu{j}{j+1};\\
\Delta(\cuu{j}1, \cuu{j}2, \dots,\cuu{j}{j+1})\ge {A_{j}}}}
|\prod_{i=1}^{j+1} \tla \eff{j}i (x)|^{\frac 1{j+1}}.\]

 Combining this with  \eqref{case1} we get  \eqref{induct}.
\end{proof}

\subsection*{\it Proof of Theorem \ref{mainthm}}
Since $\gamma\in C^{d+1}(I)$ satisfies \eqref{torsion}, by
continuity it follows that there is  a constant $C_\gamma$ such that
\[ \|(M_\tau^\gamma)^{-1}\|\le C_\gamma, \, \tau\in I. \]
Let  $0<\epsilon\le 1$ be a small number so that Proposition
\ref{l2fractal} and Lemma \ref{induction} holds. Then fix
$0<\delta<1$ such that Lemma \ref{normalization} holds.

Fixing an integer $\ell $ satisfying $1/\ell < \delta$, we now break
the interval $I$ such that $I=\cup_{j=0}^{\ell-1} [\frac j \ell,
\frac{j+1}\ell]$. Then let us set $h=1/\ell$ and
\[
f_j(t)= hf(ht+jh)\chi_{I}.
\]
 Recalling \eqref{normalcur} and
\eqref{measureth}, for $j=0,\dots,\ell-1$ we also set
\[
\gamma_j=\gamma_{jh}^{h},   \,\,\,\, \mu_j=
\frac{1}{C_{\gamma,j,h}}\, \mu_{jh}^{h},
\]
where $\mu_{jh}^{h}$ is defined by \eqref{measureth} and $
{C_{\gamma, j, h}}=(1+C\|(M_{jh}^\gamma)^{-t}\|^\alpha)
h^{-\beta(\alpha)}. $

Now by Lemma \ref{normalization} it follows that $\gamma_j\in \clag$
and by Lemma \ref{rescale}  we see that $\mu_j\in
\mathfrak{B}(\alpha,1)$. Hence, after rescaling (see {Lemma
\ref{rescale}}) we have \begin{equation} \label{decompp} \|\tla f
\|_{L^q(d\mu)}\le \sum_{j=0}^{\ell-1}  \|\tla f\chi_{[jh,
(j+1)h]}\|_{L^q(d\mu)}
=\sum_{j=0}^{\ell-1} {(C_{\gamma,j,h}})^\frac1q
\|T_{\lambda}^{\gamma_j} f_j \|_{L^q(d\mu_j)}. \end{equation}
Therefore for the proof of Theorem \ref{mainthm} it is sufficient to
show \eqref{frac} when $\gamma\in \clag$, $\mu\in \mathfrak
B(\alpha,1)$.

 Let $p\ge 1$, $q\ge 1$ be numbers such that  $d/{q}\le (1-1/p)$, $q\ge 2d$,
and $\beta(\alpha)/q + 1/p < 1$, $q
> \beta(\alpha) + 1$. It is enough  to
consider $q\ge p$. The other case follows by H\"older's inequality.   Let $\gamma\in \clag$, $\mu\in \mathfrak
B(\alpha,1)$, and $f$ be a function supported in $I$ with
$\|f\|_{L^p(I)}=1$ such that
\[ Q_\lambda(R)=Q_\lambda(R,p,q)\le  2 \| T_\lambda^\gamma f\|_{L^q(d\mu,B_R)}.\]

Set $A_0=1$ and let $A_1,$ $\dots$, $A_{d-1}$ be dyadic numbers such
that $\delta\gg A_1\gg A_2 \dots\gg A_{d-1}$. These numbers are
to be chosen later. Then, by recalling \eqref{1d-decomp}, using
Lemma \ref{induction}, and noting $q\ge p$, we see that
\begin{align*}
&\,\,\Big\|\max_{\cu i} |\tla \ef{i}|\Big\|_{L^q(d\mu,B_R)} \le
\Big(\sum_{\cu i} \Big\|\tla
\ef{i}\Big\|_{L^q(d\mu,B_R)}^q\Big)^\frac1q
\\
\le\, &{A_i}^{1-\frac1p-\frac{\beta(\alpha)}{q}} Q_{ \lambda}(R)
\Big( \sum_{\cu i} \|\ef{i}\|_p^q\Big)^\frac1q \le
{A_i}^{1-\frac1p-\frac{\beta(\alpha)}{q}} Q_{ \lambda}(R) \Big(
\sum_{\cu i} \|\ef{i}\|_p^p\Big)^\frac1p
\\
=\,&{A_i}^{1-\frac1p-\frac{\beta(\alpha)}{q}} Q_{ \lambda}(R)
\|f\|_p\,.
\end{align*}
Since there are as many as $O(A_{d-1}^{-d})$ $d$-tuples
$(\cuu{d-1}1,\cuu{d-1}2, \dots,\cuu{d-1}d)$ of intervals, using
Proposition \ref{l2fractal}, we also have
\[
\| \max_{\substack{\cuu{d-1}1,\cuu{d-1}2, \dots,\cuu{d-1}d;\\
\Delta(\cuu{d-1}1, \cuu{d-1}2, \dots,\cuu{d-1}d)\ge A_{d-1}}}
|\prod_{i=1}^d \tla \eff{d-1}i (x)|^\frac1d \|_{L^q(d\mu)}  \le
CA^{-C}_{d-1}\lambda^{-\frac\alpha q} \|f\|_p.
\]
  By
 \eqref{1d-decomp} and combining the above two
estimates, we see that
\begin{align*}
\|\tla f\|_{L^q(d\mu)}\le C \sum_{i=1}^{d-1}
A_{i-1}^{-C}{A_i}^{1-\frac1p-\frac{\beta(\alpha)}{q}} Q_{
\lambda}(R) \|f\|_p + CA^{-C}_{d-1}\lambda^{-\frac\alpha q}\|f\|_p
\end{align*}
holds independent of $\gamma\in \clag$, $\mu\in \mathfrak
B(\alpha,1)$. Taking supremum 
with respect to $f$ with
$\|f\|_p\le 1$, $\gamma\in \clag$,  and $\mu\in \mathfrak
B(\alpha,1)$, we get
\begin{equation*}
Q_\lambda(R)\le C\sum_{i=1}^{d-1}
A_{i-1}^{-C}{A_i}^{1-\frac1p-\frac{\beta(\alpha)}{q}} Q_{
\lambda}(R) + CA^{-C}_{d-1}\lambda^{-\frac\alpha q}
\end{equation*}
from the definition of $Q_\lambda(R)$. This gives
\begin{equation*}
\lambda^{\frac\alpha q} Q_\lambda(R) \le C\sum_{i=1}^{d-1}
A_{i-1}^{-C}{A_i}^{1-\frac1p-\frac{\beta(\alpha)}{q}}
\lambda^\frac\alpha q Q_{ \lambda}(R) + CA^{-C}_{d-1}.
\end{equation*}
Since $1-\frac1p-\frac{\beta(\alpha)}{q}>0$, we can successively
choose $A_1, \dots, A_{d-1}$  so that
$CA_{i-1}^{-C}{A_i}^{1-\frac1p-\frac{\beta(\alpha)}{q}}$
$<\frac{1}{2d}$ for $i=1,\dots, d-1$. Hence  we get
\begin{equation*}
\lambda^{\frac\alpha q} Q_\lambda(R) \le \frac12\lambda^{\frac\alpha
q} Q_\lambda(R)+ CA^{-C}_{d-1}
\end{equation*}
whenever $ \lambda \ge 1$.  So, it follows that $
\lambda^{\frac\alpha q} Q_\lambda(R) \le CA^{-C}_{d-1}$. Therefore $
Q_\lambda(R) \le C \lambda^{-\frac \alpha q}$.  Letting $R
\rightarrow \infty$ completes the proof. \qed

\begin{rmk}\label{uniform} Note that the estimates in Proposition \ref{l2fractal} and Lemma \ref{induction} hold uniformly  for
all $\gamma\in \clag $,  $\mu\in \mathfrak B(\alpha,1)$ if
$\epsilon>0$ is sufficiently small, and Lemma \ref{multidecomp}
remains valid regardless of particular $\gamma \in \clag$. Hence the
last part of the proof of Theorem \ref{mainthm} actually shows that
there is a constant $C$, independent of $\gamma,$ $\mu$, such that
\[\| T_\lambda^\gamma f\|_{L^q(d\mu)} \le C\lambda^{-\frac \alpha q}
\|f\|_{L^p(I)}\] provided that $\gamma\in \clag $,  $\mu\in
\mathfrak B(\alpha,1)$ and $\epsilon>0$ is sufficiently small.
\end{rmk}

\section{Proof of Theorem \ref{finitethm}; Finite type curves}

 As in the nondegenerate case,
the curve of finite type may be considered  as a perturbation of a
monomial curve in a sufficiently small neighborhood. The following
is a simple consequence of Taylor's theorem.

\begin{lemma}\label{monomial} Let $ \gamma:I=[0,1] \to \mathbb
{R}^d$ be a smooth curve. Suppose that $\gamma$ is of finite type at
$\tau\in I$. Then there exist $\delta>0$, a $d$-tuple $\mba =
(a_1,\cdots,a_d)$ of positive integers satisfying $a_1<a_2< \cdots
<a_d$ such that $\ma{\tau}$ $($defined by
 \eqref{defm}$)$ is  nonsingular and
\begin{equation}
\label{canonic}
 \gamma
(t+\tau)-\gamma(\tau)=\ma{\tau}
(t^{a_1}\varphi_1(t),t^{a_2}\varphi_2(t),\cdots,t^{a_d}\varphi_d(t)),
\end{equation}
 for $t\in [-\delta,\delta]\cap (I-\tau)$ where  $\varphi_k$ is a smooth
function satisfying
 \begin{equation}\label{derivative}
(t^{a_k}\varphi_k)^{(a_j)}(0)=\delta_{jk} \text{ for } 1\le
j\le  k\le d\,.
\end{equation}
\end{lemma}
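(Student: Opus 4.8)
The plan is to construct the exponent tuple $\mba$ by a greedy selection from the Taylor data of $\gamma$ at $\tau$, then to diagonalize via the change of basis $(\ma{\tau})^{-1}$ and apply Hadamard's division lemma to each component. Write $g(t)=\gamma(t+\tau)-\gamma(\tau)$, so that $g(0)=0$ and $g^{(m)}(0)=\gamma^{(m)}(\tau)$ for every $m\ge 1$. The finite type hypothesis at $\tau$ says precisely that $\{\gamma^{(m)}(\tau):m\ge 1\}$ spans $\mathbb R^d$. Define $a_1$ to be the least positive integer with $g^{(a_1)}(0)\neq 0$, and inductively let $a_k$ (for $2\le k\le d$) be the least integer $>a_{k-1}$ with $g^{(a_k)}(0)\notin \operatorname{span}\{g^{(a_1)}(0),\dots,g^{(a_{k-1})}(0)\}$. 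Since the span of all the derivatives is $\mathbb R^d$, this process terminates after exactly $d$ steps and produces positive integers $a_1<\cdots<a_d$ for which the columns $\gamma^{(a_1)}(\tau),\dots,\gamma^{(a_d)}(\tau)$ are linearly independent; hence $\ma{\tau}$ is nonsingular.

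Next set $h(t)=(\ma{\tau})^{-1}g(t)=(h_1(t),\dots,h_d(t))$, a smooth curve on $I-\tau$ with $h(0)=0$ and $g(t)=\ma{\tau}\,h(t)$. By the choice of the $a_k$ we have $h^{(a_k)}(0)=(\ma{\tau})^{-1}\gamma^{(a_k)}(\tau)=e_k$ for $1\le k\le d$, and moreover the minimality in each greedy step forces $g^{(m)}(0)\in \operatorname{span}\{g^{(a_1)}(0),\dots,g^{(a_{k-1})}(0)\}$ whenever $a_{k-1}<m<a_k$ (with the convention $a_0=0$), so that $h^{(m)}(0)\in\operatorname{span}\{e_1,\dots,e_{k-1}\}$ there. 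Reading off the $j$-th coordinate and sorting the integers $m<a_j$ into the cases $m<a_1$, $m=a_i$ with $i<j$, and $a_{i-1}<m<a_i$ with $i\le j$, one checks in each case that $h_j^{(m)}(0)=0$, while $h_j^{(a_j)}(0)=1$. Thus each $h_j$ vanishes at $0$ to order exactly $a_j$.

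Finally, Hadamard's lemma (Taylor's theorem with integral remainder) yields smooth functions $\varphi_j$ with $h_j(t)=t^{a_j}\varphi_j(t)$ on $I-\tau$; explicitly
\[
\varphi_j(t)=\int_0^1 \frac{(1-s)^{a_j-1}}{(a_j-1)!}\,h_j^{(a_j)}(st)\,ds .
\]
Substituting back gives $g(t)=\ma{\tau}\,(t^{a_1}\varphi_1(t),\dots,t^{a_d}\varphi_d(t))$ for all $t\in I-\tau$, so any $\delta>0$ works in the statement, and since $t^{a_k}\varphi_k(t)=h_k(t)$ we get $(t^{a_k}\varphi_k)^{(a_j)}(0)=h_k^{(a_j)}(0)=\delta_{jk}$, which is exactly \eqref{derivative}. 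The only delicate point is the combinatorial bookkeeping in the second paragraph, namely verifying that the greedy choice of exponents is precisely what forces component $h_j$ to vanish to order $a_j$ and no less; everything else is elementary linear algebra together with the standard smooth division lemma.
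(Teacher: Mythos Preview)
Your proof is correct and shares the paper's key step: the greedy selection of $a_1<\cdots<a_d$ so that each $\gamma^{(a_k)}(\tau)$ is the first derivative escaping the span of the earlier ones. Where you diverge is in the factorization. The paper writes out the Taylor polynomial to order $a_d$, expresses each intermediate derivative $\gamma^{(\ell)}(\tau)$ with $a_{k-1}<\ell<a_k$ as a combination of $\gamma^{(a_1)}(\tau),\dots,\gamma^{(a_{k-1})}(\tau)$, and collects terms column by column, handling the tail with a Lagrange-form remainder. You instead apply $(\ma{\tau})^{-1}$ first, reducing to the coordinate system in which $h^{(a_k)}(0)=e_k$, and then invoke Hadamard's division lemma (integral remainder) to pull out $t^{a_j}$ from each $h_j$. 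Your route is tidier: it makes the vanishing-order bookkeeping transparent and sidesteps a small looseness in the paper's argument, where the Lagrange intermediate points $c_i(t)$ need not depend smoothly on $t$ (only the full remainder $t^{a_d+1}\mathcal E$ is guaranteed smooth). Both arguments yield the same objects, and your observation that any $\delta>0$ works is correct.
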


The last condition \eqref{derivative} implies that $\varphi_k (0) =
1/(a_k!)$ for $1\leq k \leq d$. Furthermore it is easy to see that
$\mba$ and $\varphi_1(t), \dots, \varphi_d(t)$ are uniquely
determined. To see this,
suppose that
\[M(t^{a_1}\varphi_1(t),\cdots,t^{a_d}\varphi_d(t))=
M'(t^{b_1}\widetilde\varphi_1(t),\cdots,t^{b_d}\widetilde\varphi_d(t))\]
for nonsingular matrices $M, M'$, positive integers $b_1<
b_2<\dots<b_d$ and smooth $\widetilde\varphi_i$ with
$(t^{b_k}\widetilde\varphi_k)^{(b_j)}(0)=\delta_{jk}$ for $1\le j\le
k\le d$.   Now let $M_k$ and $M_k'$ denote the $k$-th column of
matrices $M$ and $M'$, respectively. The above is written as
\[ M_1t^{a_1}\varphi_1(t)+\dots+ M_dt^{a_d}\varphi_d(t)=M_1't^{b_1}\widetilde\varphi_1(t)
+\dots+M_d't^{b_d}\widetilde\varphi_d(t). \] Differentiating $a_1$
times at $t=0$, it is easy to see $b_1\le a_1$. By symmetry we also
have $b_1\ge a_1$. Hence $a_1=b_1$ and by \eqref{derivative} we see
that $M_1=M_1'$. Similarly, by differentiating $a_2$ times at $t=0$
and using \eqref{derivative} it follows that $a_2=b_2$  and
$M_2=M_2'$. By repeating this we see that $a_1=b_1$, $\dots,$
$a_d=b_d$ and $M_1=M'_1$, $\dots,$ $M_d=M'_d$. Then,  since
$M_1,\dots, M_d$ are linearly independent,   it follows that
$\varphi_i(t)=\widetilde\varphi_i(t)$ for $i=1,\dots, d$.

Therefore, thanks to Lemma \ref{monomial} we can have the following
definition.

\begin{defn} Let $ \gamma:I=[0,1] \to \mathbb
{R}^d$ be a smooth curve and $\tau\in I$. If there are a nonsingular
matrix $M$, positive integers $a_1, a_2, \cdots, a_d$ with $a_1<a_2<
\cdots <a_d$ and smooth functions $\varphi_1,\dots, \varphi_d$
satisfying \eqref{derivative} such that
\begin{equation*}
\gamma
(t+\tau)-\gamma(\tau)=M(t^{a_1}\varphi_1(t),t^{a_2}\varphi_2(t),\cdots,t^{a_d}\varphi_d(t)),
\,\, t\in [-\delta,\delta]\cap (I-\tau)
\end{equation*}
for some $\delta>0$, then we say that $\gamma$ is of type $\mba$ at
$t=\tau$.
\end{defn}

\begin{proof}[Proof of Lemma \ref{monomial}] Let $a_1$ be the smallest integer such that
$\gamma^{(a_1)}(\tau)\neq 0$. And let $a_2$  be the smallest integer
such that $\gamma^{(a_1)}(\tau)$ and $\gamma^{(a_2)}(\tau)$ are
linearly independent. Then, we inductively choose $a_{j}$ to be the
smallest integer such that $\gamma^{(a_1)}(\tau),$ $ \dots,$ $
\gamma^{(a_{j-1})}(\tau),\gamma^{(a_{j})}(\tau) $ are linearly
independent. Since $\gamma$ is finite type at $\tau$, this gives
linearly independent vectors $\gamma^{(a_1)}(\tau),
\dots,\gamma^{(a_{d})}(\tau)$.

Let us set $a_0=0$. Then for $j=1, \dots, d$, it follows that if
$a_{j-1}<\ell<a_{j}$
\begin{equation} \label{span}\gamma^{(\ell)}(\tau)\in
\text{span}\{\gamma^{(a_1)}(\tau), \dots,
\gamma^{(a_{j-1})}(\tau)\}.\end{equation}
By Taylor expansion of
$\gamma(t)$ at $t=\tau$, we  write
\[
\gamma(t+\tau) = \gamma(\tau) + \sum_{\ell=1}^{a_d}
\frac{t^{\ell}}{\ell !} \gamma^{(\ell)}(\tau)+
 \frac{t^{a_d+1}}{(a_d+1)!}\, \mathcal E (\mathbf{c})
\]
where $\mathcal E (\mathbf{c}) =
(\gamma_1^{(a_d+1)}(c_1),\dots,\gamma_d^{(a_d+1)}(c_d))$ and
$\mathbf{c} = (c_1,\dots,c_d)$ with $c_i \in (\tau,t+ \tau)$, $1\leq
i \leq d$. Then by \eqref{span} it follows that for $j=2,\dots, d$
\[\sum_{\ell=a_{j-1}}^{a_j-1}
\frac{t^{\ell}}{\ell !}\gamma^{(\ell)}(\tau)
=\frac{t^{a_{j-1}}}{a_{j-1}!}\gamma^{(a_{j-1})}(\tau)+
\sum_{k=1}^{j-1} p_{j,k}(t) \gamma^{(a_k)}(\tau)\] with polynomials
$p_{j,k}(t)$ which consist of monomials of degree $ d_{j,k}$,
$a_j-1\ge d_{j,k}\ge a_{j-1}+1$. Also, $\mathcal E (\mathbf{c})$ is
obviously spanned by $\gamma^{(a_1)}(\tau),$ $ \dots,$
$\gamma^{(a_{d})}(\tau) $ so that $ \frac{t^{a_d+1}}{(a_d+1)!}
\mathcal E (\mathbf{c})= e_{1}^*(\mathbf{c})\,
t^{a_d+1}\gamma^{(a_1)}(\tau)+\dots
+e_{d}^*(\mathbf{c})\,t^{a_d+1}\gamma^{(a_{d})}(\tau)$. Therefore
\[
\gamma(t+\tau) = \gamma(\tau) + \sum_{k=1}^{d}
\Big(\frac{t^{a_k}}{a_k !} + p_k(t)+ e_{k}^*(\mathbf{c}) t^{a_d+1}\Big)
\gamma^{(a_k)}(\tau)
\]
where $p_k$ is a polynomial  which consists of monomials of degree
$\ell $, $a_k\le \ell\le a_d$ and $\ell\not\in\{a_{k},
a_{k+1},\dots, a_{d}\}$. We now set
\[t^{a_k}\varphi_k(t)=(\frac{t^{a_k}}{a_k !} + p_k(t)+ e_{k}^*(\mathbf{c})
t^{a_d+1}).\] Then \eqref{canonic} follows and \eqref{derivative} is
easy to check. This completes the proof.
\end{proof}


\subsection*{\it Normalization of finite type curves.} Let $\mba = (a_1,\cdots,a_d)$ be
a $d$-tuple of positive integers satisfying $a_1<a_2< \cdots <a_d$.
For $\epsilon > 0$, let us define  the class $\mathfrak{G}^\mba
(\epsilon)$ of smooth curves by setting
\begin{align*}
\mathfrak{G}^\mba(\epsilon) = \Big\{ \gamma \in C^\infty (I) :
\gamma(t)&=(t^{a_1}\varphi_1(t),t^{a_2}\varphi_2(t),\cdots,t^{a_d}\varphi_d(t)), \\
&\Big\|\varphi_i - \frac{1}{a_i!} \Big\|_{C^{a_d+1}(I)} \leq
\epsilon\Big\}.
\end{align*}
Let
$\gamma$ be of type $\mba$ at $\tau$. 
Recalling \eqref{defm} and \eqref{normalcur}, for
$[\tau,\tau+h]^*\subset I$ let us set
\begin{equation}\label{scaled}
\gamma_{\tau}^{h,\mba}(t)= [\ma \tau D_h^\mba
]^{-1}(\gamma(ht+\tau)-\gamma(\tau)).
\end{equation}
Here $D_h^\mba$ is given by \eqref{diagonal}.
Then by Lemma \ref{monomial} it
follows that
\begin{equation}\label{finitenormal}
\gatha= (t^{a_1}\varphi_1(ht), t^{a_2} \varphi_2(ht), \dots,
t^{a_d}\varphi_d(ht))
\end{equation}
for  $\varphi_1, \dots, \varphi_d$ which are smooth functions on $I$
and  satisfy $\eqref{derivative}$. Hence, it is easy to see the
following.

\begin{lemma}\label{finite}   Let $ \gamma:I=[0,1] \to \mathbb
{R}^d$ be a smooth curve. Suppose that $\gamma$ is of type $\mba$ at
$\tau$. Then for any $\epsilon>0$ there is an
$h_\circ=h_\circ(\mba,\epsilon,\tau)>0$ such that $\gatha\in
\mathfrak{G}^\mba(\varepsilon)$ if $[\tau,\tau+h]^*\subset I$ and
$0<|h|<h_\circ$.
\end{lemma}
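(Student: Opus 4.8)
The plan is to derive everything from the explicit expansion \eqref{finitenormal} together with the normalization condition \eqref{derivative} on the functions $\varphi_1,\dots,\varphi_d$. Recall that because $\gamma$ is of type $\mba$ at $\tau$, Lemma \ref{monomial} (or rather the definition extracted from it) gives smooth functions $\varphi_1,\dots,\varphi_d$ on a fixed interval $[-\delta_0,\delta_0]\cap(I-\tau)$ satisfying $(t^{a_k}\varphi_k)^{(a_j)}(0)=\delta_{jk}$ for $1\le j\le k\le d$, and such that $\gamma(t+\tau)-\gamma(\tau)=\ma{\tau}(t^{a_1}\varphi_1(t),\dots,t^{a_d}\varphi_d(t))$. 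Substituting into \eqref{scaled} and using the diagonal structure of $D_h^\mba$, one gets precisely the componentwise identity $\gatha(t)=(t^{a_1}\varphi_1(ht),\dots,t^{a_d}\varphi_d(ht))$ valid for $|ht|\le\delta_0$, i.e. for all $t\in I$ provided $|h|\le\delta_0$. So the only real content is to show that the rescaled components $\psi_k^h(t):=\varphi_k(ht)$ are close to the constant $1/(a_k!)$ in $C^{a_d+1}(I)$ when $|h|$ is small.

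First I would record the Taylor expansion of each $\varphi_k$ about $0$: since $\varphi_k\in C^\infty$ near $0$ and $\varphi_k(0)=1/(a_k!)$ (which follows from \eqref{derivative} with $j=k$, as the text notes), we can write $\varphi_k(s)=\tfrac1{a_k!}+s\,\rho_k(s)$ with $\rho_k$ smooth on $[-\delta_0,\delta_0]$. Then $\psi_k^h(t)-\tfrac1{a_k!}=ht\,\rho_k(ht)$, and differentiating $m$ times in $t$ (for $0\le m\le a_d+1$) by the Leibniz rule produces a sum of terms each carrying a factor $h^{\,j}$ with $j\ge 1$ times a bounded quantity $t^{\,\cdot}\rho_k^{(\cdot)}(ht)$; all the derivatives $\rho_k^{(i)}$ for $i\le a_d$ are bounded on $[-\delta_0,\delta_0]$ by some constant $B=B(\mba,\tau)$ (this uses that $\varphi_k\in C^{a_d+2}$ near $0$, which is part of the smoothness hypothesis). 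Hence $\|\psi_k^h-\tfrac1{a_k!}\|_{C^{a_d+1}(I)}\le C(\mba,\tau,d)\,B\,|h|$ for all $|h|\le\delta_0$. Choosing $h_\circ=h_\circ(\mba,\epsilon,\tau)$ so that $C(\mba,\tau,d)\,B\,h_\circ\le\epsilon$ and also $h_\circ\le\delta_0$ finishes the argument: for $0<|h|<h_\circ$ and $[\tau,\tau+h]^*\subset I$, the curve $\gatha$ is of the displayed monomial form with each component function within $\epsilon$ of $1/(a_i!)$ in $C^{a_d+1}(I)$, which is exactly membership in $\mathfrak G^\mba(\epsilon)$.

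There is essentially no obstacle here; the statement is, as the text says, ``easy to see.'' The only mild care needed is bookkeeping: one must check that the $h$-rescaling in \eqref{scaled} really cancels the $D_h^\mba$ factor componentwise so that the leading monomial degrees $a_k$ are preserved (this is immediate from $D_h^\mba=(h^{a_1}e_1,\dots,h^{a_d}e_d)$ and the fact that the $k$-th component of $\gamma(ht+\tau)-\gamma(\tau)$, in the basis given by the columns of $\ma\tau$, equals $(ht)^{a_k}\varphi_k(ht)=h^{a_k}t^{a_k}\varphi_k(ht)$), and that the functions $\varphi_k$ in \eqref{finitenormal} again satisfy \eqref{derivative} — but this last point follows because $(t^{a_k}\psi_k^h)^{(a_j)}(0)=h^{a_j}(s^{a_k}\varphi_k)^{(a_j)}(0)\cdot h^{-a_k}$... more simply, the uniqueness clause following Lemma \ref{monomial} forces the normalization to persist, since $\gatha$ is itself of type $\mba$ at $0$. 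So the proof reduces to the one-line estimate $\|\varphi_k(h\,\cdot)-\varphi_k(0)\|_{C^{a_d+1}(I)}=O(|h|)$, uniform over the relevant range, together with assembling these component bounds. I would present it in roughly the three steps above: (i) substitute \eqref{canonic}/\eqref{scaled} to get the componentwise form \eqref{finitenormal}; (ii) Taylor-expand each $\varphi_k$ and bound $C^{a_d+1}$-norms of the rescaled tails by $C|h|$; (iii) choose $h_\circ$.
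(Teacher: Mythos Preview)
Your proposal is correct and follows exactly the approach the paper intends: the paper does not give an explicit proof of this lemma but simply derives \eqref{finitenormal} and says ``Hence, it is easy to see the following,'' and your argument is the natural unpacking of that sentence. The only superfluous step is your worry about whether the rescaled $\psi_k^h$ again satisfy \eqref{derivative}; membership in $\mathfrak G^\mba(\epsilon)$ only asks for the monomial form and the $C^{a_d+1}$-closeness of $\psi_k^h$ to $1/a_k!$, so that condition is not needed.
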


The curves in $\mathfrak{G}^\mba(\varepsilon)$ are clearly close to
the curve \[\gamma^\mba_\circ =\Big( \frac{t^{a_1}}{a_1!},\,\dots\,
,\frac{t^{a_d}}{a_d!}\Big).\] Hence,  as in the nondegenerate case
the upper and lower bounds of the torsion can be controlled
uniformly as long as the curve belongs to
$\mathfrak{G}^\mba(\varepsilon)$. The following is a slight variant
of Lemma 2 in \cite{DeM}.

\begin{lemma}\label{finitetorsion}
 Let $\gamma(t) \in  \mathfrak{G}^\mba(\varepsilon)$. If $\varepsilon>0$ is sufficiently small,
 then there is a constant $B>0$, independent of $\gamma$, such that
\begin{equation*}
(B/2)\, t^{\sum_{i=1}^d a_i - \frac{d(d+1)}{2}}\leq \det(\gamma'(t)
, \cdots , \gamma^{(d)} (t) ) \leq  2B\, t^{\sum_{i=1}^d a_i -
\frac{d(d+1)}{2}}.
\end{equation*}
\end{lemma}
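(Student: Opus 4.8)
The plan is to compute the determinant $\det(\gamma'(t),\dots,\gamma^{(d)}(t))$ directly from the parametrization $\gamma(t)=(t^{a_1}\varphi_1(t),\dots,t^{a_d}\varphi_d(t))$ with $\|\varphi_i-1/a_i!\|_{C^{a_d+1}(I)}\le\epsilon$, and to extract the leading power of $t$ by factoring out the dominant monomial in each entry. The key observation is that for each $k$, the $j$-th derivative of the $k$-th component $t^{a_k}\varphi_k(t)$ has the form $t^{a_k-j}\psi_{k,j}(t)$ where $\psi_{k,j}$ is smooth, and when $\varphi_k\equiv 1/a_k!$ exactly one gets $\psi_{k,j}(0)=\binom{a_k}{j}\frac{j!}{a_k!}\ne 0$ for $j\le a_k$ (and the term vanishes identically if $j>a_k$, but this never occurs among $j=1,\dots,d$ since $a_k\ge k\ge$ the relevant indices after we arrange columns properly). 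The exponent bookkeeping is exactly the one already carried out in Lemma \ref{rescale}: the total power of $t$ coming out of row $j$ (the $j$-th derivative) across all columns is $\sum_k (a_k - j) $ summed appropriately, and collecting these gives $\sum_{i=1}^d a_i - (1+2+\cdots+d) = \sum_{i=1}^d a_i - \frac{d(d+1)}{2}$.

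More precisely, first I would write $\gamma^{(j)}(t) = \big( (t^{a_1}\varphi_1)^{(j)}(t), \dots, (t^{a_d}\varphi_d)^{(j)}(t)\big)^t$ for $j=1,\dots,d$ and use Leibniz's rule to factor $(t^{a_k}\varphi_k)^{(j)}(t) = t^{a_k-j} g_{k,j}(t)$ where $g_{k,j}(t) = \sum_{m=0}^{j}\binom{j}{m}\frac{a_k!}{(a_k-j+m)!}\, t^{m}\varphi_k^{(m)}(t)$ is smooth on $I$ (here $a_k\ge d\ge j$ guarantees the factor $t^{a_k-j}$ has a nonnegative exponent and $g_{k,j}$ is genuinely smooth, not merely at $t=0$). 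Factoring $t^{a_k-j}$ out of the $(j,k)$ entry of the matrix $(\gamma'(t),\dots,\gamma^{(d)}(t))$ — that is, $t^{a_k-j}$ from row $j$, column $k$ — by multilinearity of the determinant pulls out a total factor of $t^{\sum_{k}\sum_{j}(a_k-j)/\,}$; since each column $k$ contributes $\sum_{j=1}^d(a_k-j) = d\,a_k - \frac{d(d+1)}{2}$... but that is not right, so instead factor $t^{a_k - j}$ assigning the $t^{a_k}$-part to column $k$ and the $t^{-j}$-part to row $j$: column $k$ contributes $t^{a_k}$ (total $t^{\sum a_k}$) and row $j$ contributes $t^{-j}$ (total $t^{-\frac{d(d+1)}{2}}$), leaving $\det(\gamma'(t),\dots,\gamma^{(d)}(t)) = t^{\sum_{i=1}^d a_i - \frac{d(d+1)}{2}}\, \det\big(g_{k,j}(t)\big)_{j,k}$.

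It then remains to show $\det(g_{k,j}(t))_{j,k}$ stays within a factor $2$ of a fixed nonzero constant $B = \det(g_{k,j}(0))_{j,k}$ for all $\gamma\in\mathfrak G^{\mba}(\epsilon)$ and all $t\in I$, provided $\epsilon$ is small. When $\varphi_k\equiv 1/a_k!$ we have $g_{k,j}(0) = \binom{a_k}{j}\frac{j!}{a_k!}$ (the $m=j$ term, all others carrying a positive power of $t$), so the matrix $(g_{k,j}(0))_{j,k}$ with $\varphi_k\equiv 1/a_k!$ is a fixed matrix depending only on $\mba$; one checks it is nonsingular — this is essentially the statement that the monomial curve $\gamma_\circ^{\mba}$ is nondegenerate, which follows from the generalized Vandermonde / total-positivity structure already invoked via Lemma \ref{1-1}, or by a direct row-reduction argument. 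Set $B$ to be its determinant. For general $\gamma\in\mathfrak G^{\mba}(\epsilon)$, each $g_{k,j}(t)$ differs from $\binom{a_k}{j}\frac{j!}{a_k!}$ by at most $C(\mba)\epsilon + C(\mba)|t|\cdot\|\varphi_k\|_{C^{a_d}} \le C(\mba)\epsilon$ — wait, the $|t|$ term is not small, so I must be more careful: write $g_{k,j}(t) = g_{k,j}^{\circ}(t) + r_{k,j}(t)$ where $g_{k,j}^\circ$ uses $\varphi_k\equiv 1/a_k!$ and $|r_{k,j}(t)|\le C(\mba)\epsilon$ uniformly on $I$; the determinant $\det(g_{k,j}^\circ(t))_{j,k}$ is a continuous function of $t\in[0,1]$ which I claim is bounded below away from zero (again by the nondegeneracy of the monomial curve at every $t$, cf. the classical fact that $\gamma_\circ^{\mba}$ has nonvanishing generalized Wronskian on $[0,1]$), say $\ge B_0>0$ and $\le B_1$; then by multilinearity the perturbation changes the determinant by at most $C(\mba)\epsilon$, so choosing $\epsilon$ small makes $\det(g_{k,j}(t))_{j,k}$ lie in $[B_0/2, 2B_1]$ for all $t$. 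Taking $B$ with $B/2\le B_0/2$ and $2B\ge 2B_1$ (e.g. $B=\max(B_0,B_1)$ after adjusting constants) yields the claim. The main obstacle is the clean verification that the \emph{normalized} monomial-curve matrix $(g_{k,j}^\circ(t))_{j,k}$ has determinant bounded below uniformly on the closed interval $[0,1]$ — at $t=0$ this is a single explicit nonzero number, but one needs it to not vanish anywhere on $[0,1]$; this is exactly the nondegeneracy of $\gamma_\circ^{\mba}$, which holds since its generalized Wronskian is a nonzero constant times $t^{\sum a_i - d(d+1)/2}$ with the constant being the determinant of a (invertible) generalized Vandermonde-type matrix — and I would cite Lemma 2 of \cite{DeM} for the precise bound, as the statement already does.
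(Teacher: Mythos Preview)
Your approach is essentially the paper's: factor out the power of $t$ via Leibniz and show the residual determinant is uniformly close to a nonzero constant. The argument is correct in outline, but you miss the simplification that makes the last step immediate and therefore invent an obstacle that is not there.

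Once you write $g_{k,j}(t)=\sum_{m}\binom{j}{m}\frac{a_k!}{(a_k-j+m)!}\,t^{m}\varphi_k^{(m)}(t)$, the membership $\gamma\in\mathfrak G^{\mba}(\epsilon)$ forces $|\varphi_k^{(m)}(t)|\le\epsilon$ for every $m\ge 1$ and $|\varphi_k(t)-1/a_k!|\le\epsilon$. Hence every $t$-dependent term in $g_{k,j}(t)$ is $O(\epsilon)$, uniformly on $I$, and $g_{k,j}(t)=\tfrac{1}{a_k!}\prod_{l=0}^{j-1}(a_k-l)+O(\epsilon)$ for \emph{all} $t\in I$. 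In particular your monomial-case matrix $g_{k,j}^{\circ}(t)$ is \emph{constant} in $t$: when $\varphi_k\equiv 1/a_k!$ all higher derivatives vanish and only the $m=0$ term survives. So the ``main obstacle'' you identify---bounding $\det(g_{k,j}^{\circ}(t))$ away from zero on $[0,1]$---is a single nonzero number (a generalized Vandermonde), and no appeal to \cite{DeM} or to nondegeneracy along the whole interval is needed. This is exactly how the paper argues: $\Phi_{i,j}(t)=\Phi_{i,j}(0)+O(\epsilon)$ directly, hence $\det\Phi(t)=\det\Phi(0)+O(\epsilon)$, and one takes $B=\det\Phi(0)$.

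Two minor slips: it is the $m=0$ term, not the $m=j$ term, that survives at $t=0$; and the claim $a_k\ge d\ge j$ is false (only $a_k\ge k$). The factorization still works---when $a_k<j$ the falling factorial $a_k(a_k-1)\cdots(a_k-j+1)$ is zero and $g_{k,j}(t)$ vanishes to order $j-a_k$ at $0$, so $t^{a_k-j}g_{k,j}(t)$ remains smooth---but your stated justification is wrong.
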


\begin{proof}
Let us set \[\Phi_{i,j}(t) = \sum_{k=0}^{j-1}
a_i(a_i-1)\cdots(a_i-(j-k-1)) \binom{j}{k} t^k \varphi_i^{(k)}(t) +
t^j \varphi_i^{(j)}(t).\] Then it is easy to see that
$\frac{d^j}{dt^j}(t^{a_i}\varphi_i(t))=t^{a_i-j}\Phi_{i,j}$. So,
the torsion of $\gamma(t)$ can be written as
\begin{align*}
&\det (\gamma'(t) , \cdots,\gamma^{(d)}(t) )
\\
& =  t^{\sum_{i=1}^d a_i - \frac{d(d+1)}{2}} \det \begin{pmatrix}
\Phi_{1,1}(t) & \Phi_{1,2}(t)&\cdots &\Phi_{1,d}(t)\\
\vdots&\vdots&\ddots&\vdots
\\
\Phi_{d,1}(t) & \Phi_{d,2}(t)&\cdots &\Phi_{d,d}(t)\end{pmatrix}
\\
&=: t^{\sum_{i=1}^d a_i - \frac{d(d+1)}{2}}\det\Phi(t).
\end{align*}
Since
$(t^{a_1}\varphi_1(t),t^{a_2}\varphi_2(t),\cdots,t^{a_d}\varphi_d(t))\in
\mathfrak{G}^\mba(\varepsilon)$, it follows that
\[\Phi_{i,j}(t) = \Phi_{i,j}(0)+O(\epsilon)=\frac{\prod_{l=0}^{j-1}(a_i - l)}{a_i!}
+ O(\epsilon).\] So, $\det\Phi(t)=\det\Phi(0)+O(\epsilon)$. Hence if
$\epsilon$ is sufficiently small, $\frac12 \det\Phi(0)\le
\det\Phi(t)\le 2\det\Phi(0)$. This gives the desired inequality.
\end{proof}

\begin{rmk}\label{positivity}
This lemma holds for any minor of the matrix $(\gamma'(t) ,\gamma''(t)
,\cdots,\gamma^{(d)}(t))$. In fact, if a $k \times k$ submatrix  $M_k$
contains $i_1, \dots, i_k$-th rows of $(\gamma'(t) ,\gamma''(t)
,\cdots,\gamma^{(d)}(t))$, then $\det(M_k)$ is bounded above and below
by $t^{\sum_{l=1}^k (a_{i_l} -i_l)}$ uniformly for $\gamma\in \mathfrak
G^\mba(\epsilon)$ if $\epsilon>0$ is sufficiently small.
\end{rmk}

\subsubsection*{Normalization via scaling} We now start proof of Theorem \ref{finitethm}.  Fix $0<\alpha\le
d$ and set \[\sigma = \frac1{\beta(\alpha)}\Big(\sum_{i=1}^d a_i
-\frac{d(d+1)}2\Big)+1.\]
Let $\gamma$ be a finite type curve defined on $I$ and
$[\tau,\tau+h]^*\subset I$.  We consider the integral
\[T_\tau^h f(x)=\int_{[\tau,\tau+h]^*} e^{i \lambda x \cdot \gamma(t) } f(t) \wei (t)
dt.
\]
Let us set $f_\tau^h(t) = f(ht+\tau)$. By changing variables $t\to
ht + \tau$ and \eqref{scaled}, it follows that
\begin{equation}\label{scalinga}\begin{aligned} |T_\tau^h f(x)|&= \Big |\int_{[\tau,\tau+h]^*}
e^{i \lambda x \cdot
(\gamma(t)-\gamma(\tau)) } f(t) \wei (t) dt\Big|\\
 &= |h|\Big| \int_{I} e^{i \lambda D_h^\mba (\ma \tau )^t  x \cdot \gatha} f_\tau^h(t)
 w_{\gamma}^\alpha (ht+\tau)  dt \Big| \end{aligned}\end{equation}
By \eqref{scaled} we get
\begin{equation}\label{weightscale}
|\det(\ma \tau)|^\frac1{\beta(\alpha)} |h|^{\sigma-1}
w_{\gathaa}^\alpha(t)= w_{\gamma}^\alpha (ht+\tau).\end{equation}
Hence, combining this with \eqref{scalinga} we have
\begin{align}\label{rescaledktau}
|T_\tau^h f(x)|= |\det(\ma \tau)|^\frac1{\beta(\alpha)} |h|^\sigma
\big| {T}_\lambda^{\gathaa} [ w_{\gathaa}^\alpha, f_\tau^h]
(D_h^\mba (\ma \tau)^t x) \big|.
\end{align}

 By Lemma \ref{finite}, for  $\tau
\in I$ and $\epsilon>0$, there are $\mba=\mba(\tau)$ and
$h_\circ=h_\circ(\tau,\epsilon)$ such that $\gathaa\in \mathfrak
G^\mba(\epsilon)$ provided that $[\tau, \tau+h]^*\subset I$ and
$0<|h|\le h_\circ$. Since $I$ is compact, we can obviously decompose
the interval $I$ into finitely many intervals of
disjoint interiors
so that $I=\cup_{j=0}^N [\tau_j, \tau_j+h_j]^*$ and
$\gamma_j=\gamma_{\tau_j}^{h_j,\mba_j}\in \mathfrak
G^{\mba_j}(\epsilon)$. By \eqref{scalinga} and
\eqref{rescaledktau} we see that
\begin{equation}
\label{breaking}
\begin{aligned} &|\tlaww{\wei}(x)|\le
\sum_{j=0}^N|T_{\tau_j}^{h_j} f(x)|
\\=&\sum_{j=0}^N
|\det(M^{\gamma,\mba_j}_{\tau_j})|^\frac1{\beta(\alpha)}
{|h_j|}^\sigma \big| {T}_\lambda^{\gamma_j} [ w_{\gamma_j}^\alpha,
f_{\tau_j}^{h_j}] ( D_{h_j}^{\mba_j}(M^{\gamma,\mba_j}_{\tau_j})^t
x) \big|.
\end{aligned}
\end{equation}
Since there are only finitely many terms, in order to show Theorem
\ref{finitethm} it is enough to consider $\mu\in \mathfrak
B(\alpha,1)$ and $\gamma\in \mathfrak G^\mba(\epsilon)$ for some
$\mba$ and a small enough $\epsilon>0$. In fact, define a measure by
$\int F(x)d\widetilde \mu_j=\int F((M^{\gamma,\mba_j}_{\tau_j})^t
D_{h_j}^{\mba_j} x) d\mu.$ By Lemma \ref{rescale} $\widetilde \mu_j$
satisfies \eqref{bmeasure} with some constant $C_{\widetilde \mu_j}$
since $\det(M^{\gamma,\mba_j}_{\tau_j})\neq 0$. Hence, if we set
$\mu_j=(1+C_{\widetilde \mu_j})^{-1}\widetilde \mu_j$, then
$\mu_j\in \mathfrak B(\alpha,1)$. On the other hand, from
\eqref{breaking} we have
\begin{align*}
 \| \tlaww {\wei} \|_{L^q(d\mu)} \le C \sum_{j=0}^N |\det(M^{\gamma,\mba_j}_{\tau_j})|^\frac1{\beta(\alpha)}
{|h_j|}^\sigma\|{T}_\lambda^{\gamma_j} [ w_{\gamma_j}^\alpha,
f_{\tau_j}^{h_j}] \|_{L^q(d\mu_j)}.
\end{align*}

Suppose that \eqref{tlaw} holds for $\mu\in \mathfrak B(\alpha,1)$
and $\gamma\in \mathfrak G^\mba(\epsilon)$ provided that
$\epsilon>0$ is small enough. Then we have $\|{T}_\lambda^{\gamma_j}
[ w_{\gamma_j}^\alpha, f_{\tau_j}^{h_j}] \|_{L^q(d\mu_j)}\le C
\lambda^{-\frac \alpha q} \| f_{\tau_j}^{h_j} \|_{L^p
(w_{\gamma_j}^\alpha dt)}$. So, we get
\begin{align*}
 \| \tlaww {\wei} \|_{L^q(d\mu)} \le \lambda^{-\frac \alpha q}
\sum_{j=0}^N
|\det(M^{\gamma,\mba_j}_{\tau_j})|^\frac1{\beta(\alpha)}
{|h_j|}^\sigma  \| f_{\tau_j}^{h_j} \|_{L^p (w_{\gamma_j}^\alpha
dt)}
\end{align*}
By changing the variables $t\to ({t-\tau_j})/h_j$ and
\eqref{weightscale} it is easy to see that $\| f_{\tau_j}^{h_j}
\|_{L^p (w_{\gamma_j}^\alpha dt)}$ $=
|\det(M^{\gamma,\mba_j}_{\tau_j})|^{-\frac1{\beta(\alpha)}}
{|h_j|}^{-\sigma}  \| f \|_{L^p (w_{\gamma}^\alpha dt)}.$ Hence we
get the desired inequality.

Therefore, it suffices to show that \eqref{tlaw} holds for
$\gamma\in \mathfrak G^\mba(\epsilon)$ and $\mu\in \mathfrak
B(\alpha,1)$ if $\epsilon>0$ is sufficiently small. This will be
done in what follows.

\subsection*{\it Proof of \eqref{tlaw} when $\gamma\in
\mathfrak G^\mba(\epsilon)$ and $\mu\in \mathfrak B(\alpha,1)$.}  We
start with breaking $T_\lambda^\gamma [\wei, f]$ dyadically so that
\[T_\lambda^\gamma [\wei, f](x)= \sum_{j=0}^{\infty} T_j f,\] where
\[T_j f=\int_{[2^{-j-1},2^{-j}]} e^{i \lambda x \cdot \gamma(t) } f(t) \wei
(t) dt.\] In order to prove \eqref{tlaw} it is sufficient to show
that \begin{equation}\label{dyadic}\|T_j f \|_{L^q(d\mu)}\le C 2^{-j
\sigma(1- \frac{\beta(\alpha) }{q}-\frac1p )} \lambda^{-\frac\alpha
q}\|f\|_{L^p(\wei dt)}.\end{equation}

 Let us set
 \[\int F(x)
d\mu_j(x)= \frac{2^{ -j\beta(\alpha)\sigma }}{1+C \| (\ma
0)^{-t}\|^\alpha}\int F( D_{2^{-j}}^\mba (M_{0}^{\gamma,\mba})^t x)
d\mu(x).\] Then $\mu_j\in \mathcal B(\alpha,1)$ by Lemma
\ref{rescale}.
 By rescaling as before ({\it cf.} \eqref{scalinga}) it follows that
\begin{equation*}
\|T_j f \|_{L^q(d\mu)}\le C 2^{-j \sigma(1- \frac{\beta(\alpha) }{q}
)}\|\mathcal T_j f_j \|_{L^q(d\mu_j)} \end{equation*} where
$f_j(t)=f({2^{-j}}t)$ and
\[\mathcal T_j g= \int_{[\frac12,\, 1]} e^{i \lambda  x \cdot \gamma_{0}^{2^{-j},\mba}(t) }
g (t) [2^{(\sigma-1) j}\wei (2^{-j}t)] dt.\]
By rescaling it is easy to see that $
\gamma_{0}^{2^{-j},\mba}\in \mathfrak G^\mba(C2^{-j}\epsilon)$. If
$\epsilon>0$ is small enough,
 by Lemma \ref{finitetorsion} it follows that
 $B_1t^{\sigma -1}\le \wei (t)\le
B_2t^{\sigma -1},$ $t\in I$ with $B_1$, $B_2$, independent of
$\gamma\in \mathfrak G^\mba(\epsilon)$. Hence,  if $\epsilon>0$ is
sufficiently small, then
\[\wei (2^{-j}t) \sim 2^{-(\sigma -1)j} \sim 2^{-(\sigma -1)j} \wei
(t), \,\, t\in [1/2,1] \] with the implicit constants independent of
$\gamma$ as long as $\gamma\in \mathfrak G^\mba(C2^{-j}\epsilon)$.
So, we may disregard the weight. Therefore, for \eqref{dyadic} it is
enough to show uniform estimate $\|\mathcal T_j g
\|_{L^q(d\mu_j)}\le C\lambda^{-\frac\alpha q}\|g\|_{L^p}$ for all
$j\ge 0$. Since $ \gamma_{0}^{2^{-j},\mba}\in \mathfrak
G^\mba(C2^{-j}\epsilon)$, it is sufficient to show that if
$\gamma\in \mathfrak G^\mba(\epsilon)$ and $\mu\in \mathcal
B(\alpha, 1)$, there is a uniform constant $C$ such that
\begin{equation}\label{star}
\|T_\ast^\gamma f \| _{L^q(d\mu)} \le C \lambda^{-\frac\alpha q}\| f
\|_{L^p},\end{equation} where
\[T_\ast^\gamma f(x)=\int_{[\frac12,\, 1]} e^{i \lambda x \cdot \gamma(t) } f(t) dt.\]

Obviously the curve $\gamma\in \mathfrak G^\mba(\epsilon)$ is
uniformly non-degenerate  on $[\frac 1 2,1]$. More precisely let
$\gamma\in \mathfrak G^\mba(\epsilon)$,  $[\tau,\tau+h]\subset
[\frac12,1]$ and consider the curve $ \gamma_\tau^{\,h}$ which is
given by
\[
\gamma_{\tau}^{\,h}(t)= [M_\tau^\gamma D_h
]^{-1}(\gamma(ht+\tau)-\gamma(\tau)).
\]
Since $\tau\in [\frac12, 1]$ and $\gamma\in \mathfrak
G^\mba(\epsilon)$, it follows that $ \|(M_\tau^\gamma)^{-1}\|\le C$
if $\epsilon>0$ is small enough. Hence, by following the argument in
the proof Lemma \ref{normalization} it is easy to see that there is
an $h_0$, independent of $\gamma$, such that $\gamma_{\tau}^{\,h}\in
\clag$ if $h\le h_0$ and $[\tau,\tau+h]\subset[\frac12,1]$ (see
remark \ref{interval}). Hence, we may repeat the lines of argument
in the first part of \emph{Proof Theorem \ref{mainthm}}. In fact,
breaking the interval $[\frac12, 1]$ into $O(1/{h_0})$ essentially
disjoint intervals, by normalization via translation and rescaling
we see that $\|T^\gamma_* f\|_{L^q(d\mu)}$ is bounded by a sum of as
many as $O(1/{h_0})$ of $C\|T_\lambda^{\widetilde\gamma}
f\|_{L^q(d\widetilde \mu)}$ while $\widetilde \gamma\in \mathfrak
G(\epsilon)$ and $\widetilde \mu\in \mathfrak B(\alpha, 1)$ ({\it
cf.} \eqref{decompp}). Finally, from Remark \ref{uniform} we see
that  if $\epsilon>0$ is sufficiently small there is a uniform
constant, independent of $\widetilde \gamma$ and $\widetilde \mu$,
such that $\|T_\lambda^{\widetilde\gamma} f\|_{L^q(d\widetilde
\mu)}\le C\lambda^{-\frac\alpha q}\|f\|_p$ whenever $\widetilde
\gamma\in \mathfrak G(\epsilon)$ and $\widetilde \mu\in \mathfrak
B(\alpha, 1)$.  Therefore we get \eqref{star}. This completes the
proof.

\begin{rmk}\label{remark4nonomial} Since we only rely on scaling and stability of the estimates for the nondegenerate
curves, the argument here also works for the monomial type curves
which were considered in \cite{DeM}. In fact, let $0<a_1< \dots<a_d$
be real numbers and suppose that $ \gamma(t)= (t^{a_1} \varphi_1(t),
\dots, t^{a_d}\varphi_d(t))$, $\varphi_i(0) \neq 0$ and $\lim_{t
\rightarrow 0} t^k \varphi_i^{(k)}(t) = 0$ for $k=1,\dots, d$. Then,
 if  $d/{q}\le (1-1/p)$, $q\ge
2d$, $ \beta(\alpha)/q+1/p<1$  and  $q> \beta(\alpha)+1$, for a
sufficiently small $\delta>0$ the following estimate holds;
\[\Big\|\int_{0}^\delta  e^{i \lambda x \cdot \gamma(t) } f(t) \wei(t) dt\Big\|_{L^q(d\mu)}\le C\|f\|_{L^p(\wei dt)}.\]
\end{rmk}

\appendix

\section{A necessary condition for the estimates \eqref{frac} and \eqref{tlaw}}
 We show that  \eqref{frac} and \eqref{tlaw} hold only if
\begin{equation}\label{necc}
\beta(\alpha)/q+1/p\le 1.
\end{equation}
It is sufficient to consider \eqref{tlaw} since \eqref{frac} is a
special case of \eqref{tlaw}.
 To see this let us fix $j$ so that $d-j-1 < \alpha \leq d - j$.
We consider a measure $\mu$ which is defined by
\[
d\mu (x) = \prod_{i=1}^{j} d\delta(x_i) \, |x_{j+1}|^{\alpha - d +
j}\, dx_{j+1} dx_{j+2}\cdots dx_d.
\]
Here $\delta$ is the delta measure. Then it follows that
$\int_{B(x,\rho)} d\mu(x) \leq C \rho^{\alpha - d +j+1} \cdot
\rho^{d-j-1} = C\rho^\alpha$, i.e. \eqref{bmeasure} is satisfied.
Now let $\gamma(t)$ be a curve of finite type $\mba$ at $\tau$. So,
$\ma\tau$ is nonsingular. We choose $h>0$ small enough so that
$\gathaa\in \mathfrak G^\mba(\epsilon)$ for a small $\epsilon.$ We
define a measure $\widetilde \theta$ by
\[ \int F(x) d\widetilde{\theta}(x) = \int F(
(M_1^{\gamma_\circ^\mba})^{-t}(\ma\tau)^{-t} (D^\mba_h)^{-1} x)d\mu(
x).\] It is easy to show that $d\widetilde{\theta}$ also satisfies
\eqref{bmeasure} with some constant $C_{\widetilde \theta}$.

By taking $f(t) =\chi_{[\tau +h-h\lambda^{-\frac1d},\,\tau+h ] }(t)$
(see \eqref{scalinga}) and changing  variables $t \to h t +\tau$ we
have $ | \tlaww \wei (x) |  = \Big| h \int_{1-\lambda^{-\frac1d}}^1
e^{i \lambda D_h^\mba (\ma \tau )^t x \cdot \gatha}
 w_{\gamma}^\alpha (ht+\tau)  dt \Big|.
$ Then it follows that
\begin{align*} &\| \tlaww \wei
\|_{L^q(d\widetilde\theta)}^q = h^q \int \Big|
\int_{1-\lambda^{-\frac1d}}^1 e^{i \lambda
 x \cdot (M_1^{\gamma_\circ^\mba})^{-1}\gatha}
 w_{\gamma}^\alpha (ht+\tau)  dt \Big|^q d\mu(x)\\&
 = h^q \int \Big|
\int_{-\lambda^{-\frac1d}}^0 e^{i \lambda x \cdot
(M_1^{\gamma_\circ^\mba})^{-1}[\gamma^{h,\mba}_\tau(t+1)-\gamma^{h,\mba}_\tau(1)]}
 w_{\gamma}^\alpha (ht+h+\tau)  dt \Big|^q d\mu(x).
 \end{align*}
By \eqref{weightscale} and Lemma \ref{finitetorsion}, $\wei(h
t+\tau) = |\det(\ma\tau)|^{\frac{1}{\beta(\alpha)}} h^{\sigma-1}
w_{\gamma_\tau^{h,\mba}}^\alpha(t)\sim h^{\sigma-1}|t|^{\sigma - 1}$.
Note that $\gamma_\tau^{h,\mba}$ is nondegenerate on the interval
$[\frac12,1]$ since $\gamma_\tau^{h,\mba}$ is close to
$\gamma_\circ^\mba$ by \eqref{finitenormal}.   By Taylor's expansion
 ({\it cf.} Lemma \ref{normalization}),
$(M_1^{\gamma_\circ^\mba})^{-1}[\gamma^{h,\mba}_\tau(t+1)-\gamma^{h,\mba}_\tau(1)]=\gamma_\circ(t)+O(t^{d+1})$.
Hence it is easy to see that
\[\Big|
\int_{-\lambda^{-\frac1d}}^0 e^{i \lambda x \cdot
(M_1^{\gamma_\circ^\mba})^{-1}[\gamma^{h,\mba}_\tau(t+1)-\gamma^{h,\mba}_\tau(1)]}
 w_{\gamma}^\alpha (ht+h+\tau)  dt \Big|\gtrsim h^{\sigma-1}\lambda^{-\frac1d}\] if $x\in
\mathcal R=\{x=(x_1,\dots,x_d): |x_i|\le c\lambda^{\frac id-1}\}$
with a small $c>0$. Also note that  $ \| f \|_{L^p( \wei dt)}
 = (h \int_{-\lambda^{-1/d}}^0 \wei(ht+h+\tau) dt)^{1/p}
 \sim (h\int_{-\lambda^{-1/d}}^0 h^{\sigma-1}|t+1|^{\sigma-1} dt)^{1/p}
 \lesssim  h^{\frac \sigma p}  \lambda^{-\frac{1}{dp}} $. Hence \eqref{tlaw} implies
\[
h^{\frac \sigma p} \lambda^{-\frac\alpha q- \frac{1}{dp}}\gtrsim
h^\sigma \lambda^{-\frac{1}d}  \big( \mu(\mathcal R)\big)^\frac1q.
\]
By a computation $ \mu(\mathcal R)\sim
\lambda^{-\alpha+\frac{\beta(\alpha)}d}.$ Hence,
$\lambda^{-\frac\alpha q- \frac{1}{dp}} \gtrsim \lambda^{-\frac{1}d}
\lambda^{-\frac\alpha q+\frac{\beta(\alpha)}{dq} }$. Letting
$\lambda\to \infty$ gives the condition \eqref{necc}.

\section{Proof of Lemma \ref{1-1}.  }
\label{appendb}

Here we provide a proof of Lemma \ref{1-1}. For $1\le n\le d$\, let
us set
\[E_n = \{ \mbt \in I^n : 0< t_1 < \dots <t_n\}.\] We need  to show that $\Gamma_\gamma : E_d \to \Real^d$ is one-to-one
provided that $\gamma \in \mathfrak G^\mba (\epsilon)$ and
$\epsilon>0$ is sufficiently small. Since $\gamma\in \mathfrak
G^\mba (\epsilon)$, it is obvious that the determinant of
$\frac{\partial \Gamma_\gamma(\mbt)}{\partial \mbt}$ and  all its
minors  take the form $\det(q'(t_{\alpha_1}),\dots,
q'(t_{\alpha_n}))$ while $\alpha_1,\dots, \alpha_n\in \{1,\dots,
d\}$ and $q\in \mathfrak G^\mbb(\epsilon)$ for some  $\mbb =
(b_1,\dots,b_n)$, $b_1<\ldots<b_n$, $b_1,\dots,b_n\in \{a_1,\dots,
a_d\}$. Here $\mathfrak G^\mbb(\epsilon)$ and $\gamma^\mbb_\circ$ is
defined similarly  as before by replacing $n$ for $d$. Hence,
by the argument in \cite{DM2} (also see \cite[Section 6]{DeW}) which
is originally due to Steinig \cite{steinig}, we only need to show
that $\det(q'(t_{\alpha_1}),\dots, q'(t_{\alpha_n}))$ is single
signed and nonzero for  $(t_{\alpha_1},\dots, t_{\alpha_n})\in E_n$
provided that $q\in \mathfrak G^\mbb(\epsilon)$ and  $\epsilon>0$ is
sufficiently small. Therefore the following lemma completes the
proof.

\begin{lemma}\label{jalow} For $1\le n\le d$,  let\, $\mbb =
(b_1,\cdots,b_n)$ and $b_1, b_2, \dots, b_n$ be positive integers
satisfying that $b_1<b_2< \ldots <b_n$.  Let $\gamma\in \mathfrak
G^\mbb(\epsilon)$ and set $\Gamma_\gamma(\mbt)=\sum_{i=1}^n
\gamma(t_i)$, $\mbt=(t_1,\dots, t_n)\in I^n$.  Then if
$\epsilon=\epsilon\,(\mbb, n)>0$ is sufficiently small, there is a
constant $C$, independent of $\gamma$, such that if
$\mbt=(t_1,\dots, t_n)\in E_n$,
\begin{align}\label{finitejacobian}
&\det \bigg(\frac{\partial \Gamma_\gamma(\mbt)}{\partial \mbt}\bigg)
 \ge C \prod_{i=1}^n \Big| \det \Big( (\gamma^\mbb_\circ)'(t_i)
,\cdots,(\gamma^\mbb_\circ)^{(n)}(t_i) \Big)\Big|^{\frac{1}{n}}
\prod_{1\leq i<j\leq n}(t_j - t_i).
\end{align}
\end{lemma}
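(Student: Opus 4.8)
The plan is to reduce \eqref{finitejacobian} to a lower bound for $\det(\gamma'(t_1),\dots,\gamma'(t_n))$ — note that $\partial\Gamma_\gamma/\partial t_j=\gamma'(t_j)$, so this determinant is precisely $\det(\partial\Gamma_\gamma/\partial\mathbf t)$ on $E_n$ — and then to treat the monomial curve $\gamma^{\mathbf b}_\circ$ by an explicit computation and a general $\gamma\in\mathfrak G^{\mathbf b}(\epsilon)$ as a perturbation of it. First I would note that, by a direct computation (equivalently, the $n$-dimensional analogue of Lemma \ref{finitetorsion}), the torsion of $\gamma^{\mathbf b}_\circ$ at $t$ equals $c_0\,t^{m}$ with $c_0\neq0$ and $m:=\sum_i b_i-\tfrac{n(n+1)}2$, so the right side of \eqref{finitejacobian} is comparable to $(t_1\cdots t_n)^{m/n}\prod_{i<j}(t_j-t_i)$; it therefore suffices to show
\[
\det(\gamma'(t_1),\dots,\gamma'(t_n))\ \ge\ c\,(t_1\cdots t_n)^{m/n}\prod_{1\le i<j\le n}(t_j-t_i)\qquad(\mathbf t\in E_n)
\]
with $c>0$ independent of $\gamma\in\mathfrak G^{\mathbf b}(\epsilon)$, once $\epsilon$ is small.

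For $\gamma=\gamma^{\mathbf b}_\circ$ one has $\det((\gamma^{\mathbf b}_\circ)'(t_1),\dots,(\gamma^{\mathbf b}_\circ)'(t_n))=c_{\mathbf b}\det(t_j^{b_i-1})_{i,j}$ with $c_{\mathbf b}:=\prod_i\tfrac1{(b_i-1)!}$, and the generalized Vandermonde identity gives $\det(t_j^{b_i-1})_{i,j}=s_\lambda(\mathbf t)\prod_{i<j}(t_j-t_i)$, where $\lambda_i=b_{n+1-i}-(n+1-i)$ is a partition (the $b_i$ strictly increase), $|\lambda|=m$, and $s_\lambda$ is the Schur polynomial, homogeneous of degree $m$ with non-negative integer coefficients. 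So for the model curve it remains to prove $s_\lambda(\mathbf t)\ge(t_1\cdots t_n)^{m/n}$ on $E_n$, and here I would use that the monomial $\mathbf t^{(\lambda_n,\lambda_{n-1},\dots,\lambda_1)}$ occurs in $s_\lambda$ with coefficient $K_{\lambda\lambda}=1$; since $\sum_i(\lambda_{n+1-i}-\tfrac mn)=0$ while $(\lambda_{n+1-i})_i$ and $(\log t_i)_i$ are both non-decreasing, Chebyshev's sum inequality gives $\sum_i(\lambda_{n+1-i}-\tfrac mn)\log t_i\ge0$, i.e. $\mathbf t^{(\lambda_n,\dots,\lambda_1)}\ge(t_1\cdots t_n)^{m/n}$, hence $s_\lambda(\mathbf t)\ge(t_1\cdots t_n)^{m/n}$.

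For general $\gamma\in\mathfrak G^{\mathbf b}(\epsilon)$ I would write the $i$-th component of $\gamma'$ as $\gamma_i'(t)=t^{b_i-1}\psi_i(t)$, so $\psi_i(0)=\tfrac1{(b_i-1)!}+O(\epsilon)$ and $\|\psi_i\|_{C^{b_n}(I)}=O(1)$, Taylor-expand $\psi_i(t)=\psi_i(0)+\sum_{1\le l\le b_n-1}c_{il}t^l+R_i(t)$ with $|c_{il}|\le C\epsilon$ and $|R_i^{(k)}(t)|\le C\epsilon\,t^{b_n-k}$ on $I$ for $0\le k\le b_n$, and expand $\det(\gamma'(t_1),\dots,\gamma'(t_n))$ multilinearly over the rows. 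The term that uses $\psi_i(0)$ in every row equals $\big(\prod_i\psi_i(0)\big)\det(t_j^{b_i-1})_{i,j}=c_{\mathbf b}(1+O(\epsilon))\,s_\lambda(\mathbf t)\prod_{i<j}(t_j-t_i)$. Any term using some $c_{il}t^l$ ($l\ge1$) in a row is $O(\epsilon)$ times $\det(t_j^{e_i})_{i,j}$ with all $e_i\ge b_i-1$, which is $0$ (a repeated exponent) or $\pm s_{\lambda'}(\mathbf t)\prod_{i<j}(t_j-t_i)$ with $\lambda\subseteq\lambda'$, and since $s_{\lambda'}(\mathbf t)\le s_\lambda(\mathbf t)\,s_{\lambda'/\lambda}(\mathbf t)$ for $\mathbf t\ge0$ (comparing coefficients in the Schur basis) and $s_{\lambda'/\lambda}\le s_{\lambda'/\lambda}(1,\dots,1)$ on $[0,1]^n$, this is $\le C\epsilon\,s_\lambda(\mathbf t)\prod_{i<j}(t_j-t_i)$. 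Any term using some remainder $R_i$ I would first reduce through the collocation-determinant identity $\det(h_i(t_j))_{i,j}=\prod_{i<j}(t_j-t_i)\det([t_1,\dots,t_j]h_i)_{i,j}$ and then estimate the divided differences by Hermite--Genocchi together with $|(t^{b_i-1}R_i)^{(j-1)}(t)|\le C\epsilon\,(t^{b_i+b_n-1})^{(j-1)}$, which again yields a bound $\le C\epsilon\,s_\lambda(\mathbf t)\prod_{i<j}(t_j-t_i)$. As there are only boundedly many terms, choosing $\epsilon$ small gives $\det(\gamma'(t_1),\dots,\gamma'(t_n))\ge\tfrac12 c_{\mathbf b}\,s_\lambda(\mathbf t)\prod_{i<j}(t_j-t_i)$, and the required estimate follows from the Schur bound above.

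The hard part is the uniformity of this perturbation up to the non-compact boundary of $E_n$: both sides of \eqref{finitejacobian} vanish as $t_i\to t_j$ and as $t_i\to0$, so there is no room for a bare continuity/compactness argument, and the factorization must be carried out exactly — the Vandermonde factor $\prod_{i<j}(t_j-t_i)$ extracted by the collocation-determinant identity (valid for any functions, hence for the perturbed curve) and the vanishing at the faces $t_i=0$ read off from the homogeneity of the Schur and complete-homogeneous building blocks — so that every $O(\epsilon)$ error term is genuinely dominated by $s_\lambda(\mathbf t)\prod_{i<j}(t_j-t_i)$ throughout $E_n$. I expect the precise bookkeeping for the Taylor-remainder contributions, uniformly in $(t_1,\dots,t_n)$, to be the most delicate point.
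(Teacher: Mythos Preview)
Your approach is genuinely different from the paper's, and the treatment of the model curve $\gamma^{\mathbf b}_\circ$ is correct and elegant: the generalized Vandermonde identity plus the Chebyshev inequality $t_1^{\lambda_n}\cdots t_n^{\lambda_1}\ge(t_1\cdots t_n)^{m/n}$ on $E_n$ gives exactly \eqref{finitejacobian} in that case, and the purely polynomial error terms (those coming from the Taylor coefficients $c_{il}$) are handled cleanly via $\lambda\subseteq\lambda'$ and the combinatorial bound $s_{\lambda'}\le s_\lambda\,s_{\lambda'/\lambda}$ on $[0,1]^n$.

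The remainder step, however, is a genuine gap rather than bookkeeping. From the collocation identity and Hermite--Genocchi you obtain only \emph{entrywise} bounds $|[t_1,\dots,t_j](t^{b_i-1}R_i)|\le C\epsilon\,t_j^{b_i+b_n-j}$ on one row of the divided-difference matrix; such bounds control at best the permanent of the matrix of absolute values, and you have not shown that this permanent is $\lesssim s_\lambda(\mathbf t)$. That comparison is exactly the delicate point: the right-hand side of \eqref{finitejacobian} vanishes both as $t_i\to t_j$ and as some $t_i\to 0$, so after factoring the Vandermonde one still needs the remaining determinant to vanish at the faces $t_i=0$ at the precise rate dictated by $s_\lambda$, uniformly in $\gamma$. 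For the polynomial terms this comes for free from the Schur structure; for a row built from the non-polynomial $R_i$ there is no such structure, and your entrywise derivative bound does not supply it. What is really needed is a lemma of the type ``if $|g_i^{(k)}(t)|\le C\,t^{e_i-k}$ for $0\le k\le n-1$ with $e_i\ge b_i-1$, then $|\det(g_i(t_j))_{i,j}|\le C'\,s_\lambda(\mathbf t)\prod_{i<j}(t_j-t_i)$ on $E_n$''; this may well be true, but it is the heart of the matter and is not proved.

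The paper avoids this difficulty by taking the Dendrinos--Wright/Dendrinos--M\"uller route: it expresses $\det(\partial\Gamma_\gamma/\partial\mathbf t)$ as the top level $I_n$ of a recursive family of iterated integrals whose integrands involve only the \emph{single-point} minors $\Phi_k(t)$ of $(\gamma'(t),\dots,\gamma^{(n)}(t))$. By Lemma~\ref{finitetorsion} and Remark~\ref{positivity} these satisfy uniform two-sided bounds $\Phi_k(t)\asymp G_k$ for all $\gamma\in\mathfrak G^{\mathbf b}(\epsilon)$, so the perturbation is absorbed pointwise before any multi-point determinant is formed; the Vandermonde and torsion factors then emerge from the recursion together with an appeal to \cite{DeM}. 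Your Schur-polynomial route would be an attractive alternative if the remainder lemma above could be established, but as written the proposal does not close.
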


\begin{proof} We shall be
brief since the proof here is an adaptation of the argument in
\cite{DeM}. Let $\Phi_k(t)$ be a $k\times k $ minor of $\Phi_n(t) :=
\det \Phi(t)$, which consists of $\Phi_{i,j}(t)$ with $1\leq i, j
\leq k$. (See Lemma \ref{finitetorsion}.)

Adopting the notations in \cite{DeW, DeM}, we define a sequence of
functions  $I_k$, $1\leq k \leq n$ as follows:
\begin{align*}
I_1 (t) = \frac{ t^{\sum_{i=1}^{n-2} (a_i - i)} \Phi_{n-2}(t) \,
t^{\sum_{i=1}^n (a_i - i)} \Phi_n(t) }{(t^{\sum_{i=1}^{n-1} (a_i-i)}
\Phi_{n-1}(t) )^2}  & = t^{a_n- a_{n-1} -1} \frac{\Phi_{n-2}(t)
\Phi_n(t) }{\Phi_{n-1}(t)^2}
\end{align*}
and
\begin{align*}
 I_k(t_1,\dots,t_k) & = \prod_{l=1}^k t_l^{a_{n-k+1} - a_{n-k}-1}
\frac{\Phi_{n-k-1}(t_l)\Phi_{n-k+1}(t_l)}{\Phi_{n-k}(t_l)^2}\\
&\qquad\qquad\times\int_{t_1}^{t_2} \cdots\int_{t_{k-1}}^{t_k}
I_{k-1}(s_1, \dots, s_{k-1}) d s_{k-1} \cdots d s_1
\end{align*}
with $\Phi_{-1}, \Phi_0 \equiv 1$.
By Lemma \ref{finitetorsion} and Remark \ref{positivity}, there are
positive constants $G_k$, uniform for
$\gamma\in\mathfrak{G}^\mbb(\epsilon)$, such that $\frac{1}{2} G_k
\leq \Phi_k (t) \leq 2 G_k$ for all $t \in I$ and sufficiently small
$\varepsilon>0$. Hence $I_1(t) \gtrsim t^{a_n-a_{n-1}-1}\, G_{n-2}
\, G_n /G_{n-1}^2 $.

Now we claim that for $1\leq k \leq n-2$,
\begin{align}\label{intseq}
I_k (t_i , \dots,t_k)\gtrsim \frac{G_{n-k-1}^k \,
G_n}{G_{n-k}^{k+1}} \prod_{l=1}^k t_l^{\frac{1}{k} \sum_{i=n-k+1}^n
(a_i -i) - (a_{n-k} -(n-k))} \prod_{1\leq i< j\leq k}(t_j - t_i)
\end{align}
also holds uniformly in $\gamma \in \mathfrak{G}^\mbb(\epsilon)$.
Suppose that \eqref{intseq} holds for $k \leq n-3$. Then, it follows
that
\begin{align*}
&\quad I_{k+1} (t_1,\dots,t_{k+1} ) \gtrsim  \left( \frac{G_{n-k-2}
G_{n-k}}{G_{n-k-1}^2} \right)^{k+1} \frac{G_{n-k-1}^k \,
G_n}{G_{n-k}^{k+1}} \prod_{l=1}^{k+1} t_l^{a_{n-k} -a_{n-k-1} -1}
\\
& \quad\qquad \times \int_{t_1}^{t_2} \cdots \int_{t_k}^{t_{k+1}}
\prod_{l=1}^k s_l^{(\frac{1}{k} \sum_{i=n-k+1}^n (a_i -i) - (a_{n-k} -(n-k)))}
 \prod_{1 \leq i < j \leq k}(t_j - t_i) d s_k \cdots d s_1
 \\
&  \gtrsim \frac{G_{n-k-2}^{k+1} G_n}{G_{n-k-1}^{k+2}}
\prod_{l=1}^{k+1} t_l^{(\frac{1}{k+1} \sum_{i=n-k}^n (a_i -i) -
(a_{n-k-1} -(n-k-1)))} \prod_{1\leq i< j\leq k+1}(t_j - t_i).
\end{align*}
The first inequality is valid uniformly for $\gamma \in
\mathfrak{G}^\mbb(\varepsilon_0)$, and the last inequality is
established by modifying Corollary 7 in \cite{DeM}. 
The remaining cases $k=n-1,
n$ can also be handled similarly by making use of $\eqref{intseq}$
successively. So, it follows that
\begin{align*}
I_n(t_1,\dots,t_n) \gtrsim G_n \prod_{l=1}^n t_l^{\frac{1}{n}
\sum_{i=1}^n(a_i-i)} \prod_{1\leq i < j\leq n}(t_j-t_i)
\end{align*}
holds uniformly. Since $I_n (t_1, \dots,t_n) =  \det \partial
\Gamma_\gamma(\mbt)/
\partial \mbt  $ (see Section 5 in \cite{DeW}) and
$t_l^{\sum_{i=1}^n (a_i -i)} \sim |\det ( (\gamma^\mbb_\circ)'(t_l),
\dots,(\gamma^\mbb_\circ)^{(n)}(t_l) )|$, we conclude that
\eqref{finitejacobian} holds uniformly  for
$\gamma\in\mathfrak{G}^\mbb(\varepsilon_0)$.
\end{proof}


\end{document}